\theoremstyle{plain}
\newtheorem{thm}{Theorem}[section]
\newtheorem{lem}[thm]{Lemma}
\newtheorem{prop}[thm]{Proposition}
\theoremstyle{definition}
\newtheorem{dfn}[thm]{Definition}
\theoremstyle{remark}
\newtheorem{remark}[thm]{Remark}
\numberwithin{equation}{section}
\title[Highly irregular orbits for subshifts of finite type]{Highly irregular orbits for subshifts of finite type: large intersections and emergence}
\date{\today}
\author{Yushi Nakano}
\address[Yushi Nakano]{Department of Mathematics, Tokai University, 4-1-1 Kitakaname, Hiratuka, Kanagawa, 259-1292, JAPAN}
\email{yushi.nakano@tsc.u-tokai.ac.jp}
\author{Agnieszka Zelerowicz}
\address[Agnieszka Zelerowicz]{Department of Mathematics, University of Maryland, College Park, MD 20742, USA}
\email{azelerow@umd.edu}
\subjclass[2010]{37B10, 37B40, 37D35}
\keywords{Irregular sets, non-typical points, historic behavior, subshifts of finite type, large intersection classes, emergence}
\begin{document}

\begin{abstract}
In their recent paper \cite{KNS2019}, the first author,  S. Kiriki, and T. Soma introduced a concept of pointwise emergence to 
measure the complexity of irregular orbits.
They
constructed a residual subset of the full shift with high pointwise emergence.
In this paper 
we consider the set of points with high pointwise
emergence for topologically mixing subshifts of finite type. We show that this set has full topological
entropy, full Hausdorff dimension, and full topological pressure for any H\"older
continuous potential. 
Furthermore, we show that this set belongs to a certain
class of sets with large intersection property.
This is a natural generalization of \cite{FP2011} to
pointwise emergence and Carath\'eodory dimension.
\end{abstract}


\maketitle

\section{Introduction}\label{section:introduction}

Let $X$ be a compact metric space and $f: X\to X$ a continuous map.
Let  $\mathcal P(X)$ be the set of Borel probability measures on $X$ equipped with the weak topology.
A point $x\in X$
  is said to be \emph{irregular}, if the time average along the forward orbit of $x$ does not exist, i.e.~if
 the limit of empirical measures, 
 \begin{equation}\label{eqn:bir-average}
\delta _x^n = \frac{1}{n} \sum _{j=0}^{n-1} \delta _{f^j (x)}, \quad n\geq 1,
\end{equation}
 does not exist in $\mathcal P(X)$
 (see \cite{Thompson2010,ABC2011}).
 Such a point is also called  \emph{historic} \cite{Ruelle2001, Takens2008}, 
 \emph{non-typical}  
 \cite{BS2000} 
  or \emph{divergent} 
   \cite{CKS2005}.

Although the set of irregular points  (which we will call the \emph{irregular set} and denote by $I$) is a $\mu$-zero measure set for any invariant measure $\mu$ due to Birkhoff's ergodic theorem, the set is known to be remarkably large for abundant dynamical systems.
Pesin and Pitskel' \cite{PP84}   obtained the first result for the largeness of the irregular set from thermodynamic  viewpoint.  
In their paper, they showed that    the irregular set for the full shift has full topological entropy and full Hausdorff dimension, that is, 
\[
 h_{\mathrm{top}}( I ) = h_{\mathrm{top}}(X) \quad \text{and} \quad \mathrm{dim} _H(I ) = \mathrm{dim} _H(X) .
\]
Here $h_{\mathrm{top}}(Z)$ is the Pesin-Pitskel' topological entropy for a (not necessarily compact) Borel set $Z$ given in \cite{PP84} (see Subsection \ref{subsection:pressure} for precise definition; this is a generalization of Bowen's Hausdorff topological entropy of  \cite{Bowen1973}, and we refer to  \cite{HNP2008} for relation between entropies for a non-compact set). 
This thermodynamic largeness of irregular sets was extended to topologically mixing subshifts of finite type in \cite{BS2000} (together with the detailed study of the set of points at which Lyapunov exponent or local entropy fail to exist), 
to graph directed Markov systems in \cite{FP2011},
to continuous maps with specification property in \cite{CKS2005} (see also \cite{Thompson2010}), 
and  to continuous maps with almost specification property
in \cite{Thompson2012}.
See also \cite{HK1990, Ruelle2001, Takens2008, ABC2011, CZZ2011, CTV2015, Tian2017, BV2017, KS2017, AP2019, BKNSR2020, Yang2020}
 and references therein for the study of irregular sets from other viewpoints.

The irregular points considered in the above works were typically those, whose corresponding sequence of measures (\ref{eqn:bir-average}) oscillates between two (or finitely many) ergodic measures.
Consequently, the space of accumulation points of (\ref{eqn:bir-average}) is finitely dimensional.
In this paper we consider irregular points with high complexity, that is points for which (\ref{eqn:bir-average}) oscillates between infinitely many ergodic measures. More precisely, given an infinite sequence of distinct ergodic measures, we consider points for which the set of accumulation points of (\ref{eqn:bir-average}) contains the infinite-dimensional simplex spanned by those measures.
We obtain a lower bound on the dimension of the set of such points as the infimum over dimensions of measures in the sequence (See Theorem \ref{cor:main}).
    
Recently, Berger introduced a concept of metric emergence in \cite{Berger2017} 
in order to 'evaluate the complexity to
approximate a system by statistics' \cite{Berger2017}.
Metric emergence quantifies such phenomenas as the Newhouse phenomenon or KAM phenomenon. Even though the notion of emergence is relatively new, the study of metric emergence is already becoming  a quite active research area \cite{Berger2020, BBo, BBi, Talebi, CJZ}.
Inspired by Berger's  work, the first author, S. Kiriki, and T. Soma \cite{KNS2019} introduced a concept of pointwise emergence to measure complexity of irregular orbits. 
    The \emph{pointwise emergence} $\mathscr E_x(\epsilon ) $  at scale $\epsilon >0$ at $x\in X$ is defined by
\begin{multline}\label{eq:0403d}
\mathscr E_x(\epsilon ) = \min \Big\{ N \in \mathbb N \mid  \text{there exists   $\{ \mu _j\} _{j=1}^N \subset \mathcal P(X)$ such that} \\
\limsup _{n\to \infty}  \min _{1\leq j\leq N}  d \left( \delta _x^n , \mu _j \right) \leq \epsilon \Big \},
\end{multline}
where $d$ is the first Wasserstein metric\footnote{
For $j=1,2$, let $p_j :X\times X\to X $ be the canonical projection to the $j$-th coordinate, and $(p_j) _*\pi$ the pushforward measure of a probability measure $\pi$ on $X\times X$ by $p_j$. 
 Let $\Pi (\mu, \nu )$ be the set of probability measures $\pi$ on $X\times X$ such that $(p_1)_*\pi =\mu$ and $(p_2)_*\pi =\nu$. 
 The first Wasserstein metric $d $ is defined as
 \[
d(\mu ,\nu )= \inf _{\pi \in \Pi (\mu , \nu )} \int _{X\times X} d_X(x,y) d\pi (x,y) \quad \text{for $ \mu  ,\nu \in \mathcal P(X)$} .
 \]
} 
on $\mathcal P(X)$ (see \cite{Villani2003, Villani2008}  for its  properties; we here merely recall that $d$ is a metrization of the weak topology of $\mathcal P(X)$).
According to \cite[Proposition 1.2]{KNS2019}, $x$ is irregular
if and only if
\[
\lim _{\epsilon \to 0}  \mathscr E_x(\epsilon ) =\infty .
\]
When $\min _{j}  d \left( \delta _x^n , \mu _j \right)$ in \eqref{eq:0403d} is replaced by $\int _X \min _{j}  d \left( \delta _x^n , \mu _j \right) m (dx)$ with some $m\in \mathcal P(X)$, then the quantity given by  \eqref{eq:0403d} is called the metric emergence with respect to $m$.
A fundamental relationship between metric and pointwise emergences is the following inequality:
\begin{equation}\label{eq:0901}
\min _{x\in D}  \mathscr E_x(\epsilon ) \leq  \mathscr E_m(m(D) \, \epsilon )
\end{equation}
for every $\epsilon >0$, Borel set $D$ and $m\in \mathcal P(X)$, see  \cite[Proposition 1.4]{KNS2019}.
The pointwise emergence at $x\in X$ is called \emph{super-polynomial} (or \emph{high})
 if 
\[
\limsup _{\epsilon \to 0}  \frac{\log \mathscr E_x(\epsilon ) }{-\log \epsilon } =\infty .
\] 
As is pointed out in \cite{Berger2017}, it  is widely accepted among computer scientists that super-polynomial algorithms are impractical. From that perspective dynamical systems with high metric emergence are not feasible to be studied numerically.
The set of points with high (pointwise) emergence can be considered as statistically very complex (see also \cite{Berger2020,BBo} for other motivations to study high emergence).
Therefore, it is of great interest to investigate how large the set of points with high emergence is.

The main result of this paper is as follows.
\begin{thm}\label{thm:main}
Let $X $ be a topologically mixing subshift of finite type of $  \{ 1, 2, \ldots , \kappa\} ^{\mathbb N}$ with $\kappa\geq 2$.\footnote{We endow it with a standard metric 
\[
d_{X}(x ,y ) = \sum _{j=1}^\infty \frac{\vert x_j -y_j \vert }{\beta ^j} \quad \text{for $x=(x_1 ,x_2, \ldots ), y=(y_1, y_2, \ldots ) \in X$}
\]
 with some $\beta >1$.}
 Let $f:X \to X$ be the left-shift operation on $X$.  
 Let $E$ be the set of points $x\in X$ satisfying
 \begin{equation*}
 \lim _{\epsilon \to 0}  \frac{\log \mathscr E_x(\epsilon ) }{-\log \epsilon } =\infty.
\end{equation*}
Then, 
\[
h_{\mathrm{top}} (E) =h_{\mathrm{top}} (X)  \quad \text{and } \quad \mathrm{dim}_{H} (E) =\mathrm{dim}_{H} (X) .
\]
In addition, for any H\"older continuous function $\varphi$, we have that $P_{E}(\varphi)=P_X(\varphi)$.
That is, the set of points with high emergence carries full topological pressure.
\end{thm}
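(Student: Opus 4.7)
The plan is to construct a subset $F \subseteq E$ carrying full topological entropy and full Hausdorff dimension. The driving observation is that the pointwise emergence $\mathscr{E}_x(\epsilon)$ is essentially the Wasserstein $\epsilon$-covering number of the weak-$*$ accumulation set $V(x)$ of $(\delta_x^n)_n$: if $V(x)$ contains $N$ measures at pairwise distance $\geq \alpha$, then a pigeonhole argument applied to any candidate $\{\mu_j\}_{j=1}^{N-1}$ in \eqref{eq:0403d} forces two of the $N$ measures to be covered by the same $\mu_j$, giving $\mathscr{E}_x(\epsilon) \geq N$ for $\epsilon < \alpha/2$. Thus if I can arrange $V(x)$ to contain a sequence $(\mu_k)$ whose separation $\alpha_N = \min_{1\leq i<j\leq N} d(\mu_i,\mu_j)$ decays slowly, e.g.\ $\alpha_N \geq 1/\log N$, then $\log N / \log(1/\alpha_N) \to \infty$ and hence $\log \mathscr{E}_x(\epsilon)/(-\log\epsilon) \to \infty$, so $x\in E$.

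The first step is to choose pairwise distinct ergodic measures $(\mu_k)_{k\geq 1}$ with $h_{\mu_k}(f) \to h_{\mathrm{top}}(X)$ and $\alpha_N \geq 1/\log N$. Since $(X,f)$ is a mixing subshift of finite type, the space $\mathcal{P}_f(X)$ of invariant probability measures is an infinite-dimensional (Poulsen-type) Choquet simplex whose ergodic extreme points are dense, and measures with near-maximal entropy form an open dense subset. This gives enough room to inductively select $(\mu_k)$ meeting both the entropy condition and the mild separation rate.

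Next, following the saturated-set / Moran-set strategy of Pfister--Sullivan and its adaptations to irregular sets (Bowen, Takens--Verbitskiy, Thompson, and in particular Fan--Pollicott \cite{FP2011}), I construct $F$ by concatenating orbit segments. Fix an index sequence $(k_j)$ in which every positive integer appears infinitely often but with asymptotic density zero (so that visits to high-index, high-entropy measures dominate the time average). For each $j$, take $n_j$ much larger than $n_1+\cdots+n_{j-1}$ and $\varepsilon_j \to 0$; by ergodicity of $\mu_{k_j}$ there are at least $\exp(n_j(h_{\mu_{k_j}}(f)-\varepsilon_j))$ cylinders of length $n_j$ along which the empirical measure is $\varepsilon_j$-close in Wasserstein distance to $\mu_{k_j}$. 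Mixing of the SFT (equivalently, the specification property) allows me to glue these cylinders with bounded gaps, producing a Moran-type Cantor set $F$. By construction, for every $x\in F$ and every $k$, the empirical measures $\delta_x^{t_j}$ at the endpoints $t_j$ of the $j$-th block with $k_j = k$ converge to $\mu_k$, so $V(x) \supseteq \{\mu_k\}_{k\geq 1}$ and hence $F \subseteq E$.

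The remaining task is to show $h_{\mathrm{top}}(F) = h_{\mathrm{top}}(X)$ and $\dim_H(F)=\dim_H(X)$. The Pesin--Pitskel' entropy of $F$ is bounded below by the exponential growth rate of the number of level-$J$ cylinders produced by the construction, which is asymptotically the time-weighted average of $h_{\mu_{k_j}}(f)-\varepsilon_j$; with the schedule chosen so that high-entropy visits dominate the total length, this average converges to $h_{\mathrm{top}}(X)$, and a standard Bowen-type cover argument upgrades this count to a genuine lower bound on $h_{\mathrm{top}}(F)$. The Hausdorff dimension equality then follows from the shift-invariant identity $\dim_H(Z) = h_{\mathrm{top}}(Z)/\log\beta$ under the metric $d_X$. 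I expect the main obstacle to lie in engineering the schedule $(k_j, n_j, \varepsilon_j)$ that simultaneously (i) makes every $\mu_k$ a genuine accumulation point of $(\delta_x^n)$ for all $x\in F$, (ii) makes the low-entropy visits required by (i) contribute negligibly to the counting, and (iii) respects the Wasserstein separation scale $\alpha_N \geq 1/\log N$ so as to deliver super-polynomial emergence. Balancing these three constraints---and then verifying the lower entropy estimate through the Pesin--Pitskel' definition---is the delicate part of the argument.
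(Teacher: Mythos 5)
Your approach is in the spirit of the paper's Appendix~\ref{a:1} (a Moran-type construction), not the main proof (which goes through Carath\'eodory large-intersection classes $\mathcal G^t$, Theorems~\ref{thm:main1}--\ref{cor:main}). Even as a variant of the constructive proof, there are two genuine gaps.

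First, your route to high emergence goes through the quantitative separation rate $\alpha_N\geq 1/\log N$ for the extreme points $\mu_k$, combined with the pigeonhole bound $\mathscr E_x(\epsilon)\geq N$ when $\mathcal A_x$ contains $N$ points at pairwise distance $\geq 2\epsilon$. The pigeonhole argument itself is fine, but the requested separation is in direct tension with your entropy constraint: on a topologically mixing SFT the measure of maximal entropy is unique, and entropy is upper semicontinuous, so any sequence of (distinct) ergodic measures with $h_{\mu_k}\to h_{\mathrm{top}}(X)$ must converge weakly to the MME. Hence the $\mu_k$ eventually cluster, and there is no a priori lower bound on $\alpha_N$; asserting that a sub-polynomially decaying rate can be enforced ``by density of ergodic measures'' does not suffice and, without an explicit argument, the key estimate $\lim_{\epsilon\to 0}\log\mathscr E_x(\epsilon)/(-\log\epsilon)=\infty$ is not established. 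The paper avoids this issue entirely: Proposition~\ref{thm:3} derives high emergence from the fact that $\mathcal A_x\supseteq\Delta_L(\mathcal J)$ for the whole $L$-dimensional simplex spanned by $L+1$ \emph{linearly independent} measures, so that $N(\epsilon,\Delta_L(\mathcal J))\gtrsim \epsilon^{-L}$; no metric separation rate is needed, only non-degeneracy of the simplex. This is why the paper's saturated set $E(\mathcal J)$ (which records convex combinations, not just extreme points) is the right object.

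Second, the dimension claim for a single Moran set $F$ built from a single sequence $(\mu_k)$ with $h_{\mu_k}\to h_{\mathrm{top}}(X)$ does not deliver $h_{\mathrm{top}}(F)=h_{\mathrm{top}}(X)$. To have $\mu_1\in\mathcal A_x$ for all $x\in F$ you must let $\mu_1$-blocks dominate along a subsequence of times $n$; at those times the local rate of the Moran measure $\mu$ is close to $h_{\mu_1}$, so $\liminf_{n}(-\log\mu(C(n,x))/n)\leq h_{\mu_1}<h_{\mathrm{top}}(X)$. The mass-distribution (Billingsley/Pesin) lower bound therefore only yields $h_{\mathrm{top}}(F)\geq\inf_k h_{\mu_k}$, which is $h_{\mu_1}$ in your setup --- this is exactly what the paper's Lemma~\ref{lem:0729a2} proves, a bound by the \emph{infimum}, not the time-average. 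The paper handles this by taking, for each $\delta>0$, a separate linearly independent sequence $\mathcal J_\delta$ all of whose members satisfy $\dim_C(\mu^{(\ell)})\geq\dim_C(X)-\delta$, obtaining a set $\Lambda_\delta\subseteq E$ (or $E(\mathcal J_\delta)\subseteq E$) of dimension $\geq\dim_C(X)-\delta$, and then letting $\delta\to 0$. You would need the analogous family-of-constructions step; a single $(\mu_k)$ with low initial entropies cannot give full entropy, and choosing all $h_{\mu_k}$ close to $h_{\mathrm{top}}(X)$ makes the separation issue above strictly harder.

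Finally, the shortcut $\dim_H(Z)=h_{\mathrm{top}}(Z)/\log\beta$ is fine for the metric $d_X$ you are given, but be aware that the paper treats topological pressure as well (for arbitrary H\"older potentials); for that generality one needs a Carath\'eodory structure rather than a conformal rescaling of entropy.
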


Notice that   $\mathrm{dim} _H(A) = h_{\mathrm{top}}(A) /\beta $ for any $A\subset X$ in the setting of Theorem \ref{thm:main}, 
  so $h_{\mathrm{top}} (E) =h_{\mathrm{top}} (X) $ is equivalent to $ \mathrm{dim}_{H} (E) =\mathrm{dim}_{H} (X) $.
Our approach to proving Theorem \ref{thm:main} is a generalization of ideas in \cite{FP2011}. In \cite{FP2011} the authors introduced classes of sests, $\mathcal{G}^s$, $0<s<  \mathrm{dim} _H(X),$ such that: (a) every countable intersection of sets in a given class $\mathcal{G}^s$ also belongs to $\mathcal{G}^s$;
(b) every set in $\mathcal{G}^s$ has Hausdorff dimension at least $s$.

This was later used to show that a certain subset of the set of irregular points has full Hausdorff dimension. We extend this result in two directions. We consider a general Carath\'eodory dimension structure (see Section \ref{car-struc}) introduced by Pesin in \cite{pes97}.
We then introduce classes of sets corresponding to this structure such that the analogue of $(a)$ and $(b)$ holds, under some conditions on the Carath\'eodory structure (see Section \ref{subsection:lic}). 
We then consider the set of points with high emergence and analyse when it belongs to such defined class of sets. 
As a result we obtain a more general version of Theorem \ref{thm:main}, which states that  
under certain conditions on the Carath\'eodory structure, the set of points with high emergence has full  Carath\'eodory dimension (see Theorem \ref{thm:main-general}). 


\begin{remark}
It is possible to prove the first part of Theorem \ref{thm:main} (the statement about the Hausdorff dimension) by modifying the construction given by Barreira-Schmeling  in \cite{BS2000}. The advantage of our approach is that it gives a stronger result (see Theorem \ref{thm:main-general}).
Moreover we develop tools that could potentially be used to study largeness of other sets in addition to the set of points with high emergence (see Theorem \ref{thm:main1}).
\end{remark}

\begin{remark}
It immediately follows from (the statement for the Hausdorff dimension in)  Theorem \ref{thm:main} that $m^t_H (E) >0$ for any $t< \mathrm{dim }_H(X)$, where $E$ is the set of points with high pointwise emergence given in Theorem \ref{thm:main} and $m^t_H$ is the $t$-dimensional Hausdorff measure  (see Section \ref{car-struc} and \ref{s:app} for precise definition). 
By Frostman's lemma, this implies that there is a Borel probability measure $m$ such that the support of $m$ is included in $E$ (in particular, $m(E)>0$) and $m(B(x,r))\leq r^t$ for all $x\in X$ and $r>0$,
where $B(x,r)$ is the ball of  radius $r$ centered at  $x$.  
Therefore, by \eqref{eq:0901}, we have
\[
 \lim _{\epsilon \to 0}  \frac{\log \mathscr E_{m}(\epsilon ) }{-\log \epsilon } =\infty .
\]
We remark that Berger conjectured in \cite{Berger2017} that a ``typical''
  diffeomorphism $f$ on a compact manifold satisfies the above equation with  the normalized Lebesgue measure $\mathrm{Leb}$ of the   manifold in place of $m$, i.e.
  $ \lim _{\epsilon \to 0}  \log \mathscr E_{\mathrm{Leb}}(\epsilon ) /(-\log \epsilon ) =\infty $.
\end{remark}

\subsection{Structure of the paper}
In Section \ref{car-struc} we recall 
Carath\'eodory dimension structure and introduce modifications to Carath\'eodory outer measures needed for our constructions. 
We also introduce conditions on the Carath\'eodory dimension structure needed for the results in this paper.
In Section \ref{subsection:lic} we introduce and study classes of sets with large intersection property.
We state the general version of our main result  (Theorem \ref{thm:main-general}), 
which is an immediate consequence of two results for large intersection classes (Theorems \ref{thm:main1} and \ref{cor:main}).
In Section \ref{s:app} we give examples of  Carath\'eodory dimension structures satisfying the assumptions of Theorem \ref{thm:main-general}.
As a Corollary, we obtain Theorem \ref{thm:main}.
In Section \ref{s:1}, we give the proof of Theorem \ref{thm:main1}. 
Section \ref{s:2} is dedicated to the proof of Theorem \ref{cor:main}.

\section{Carath\'eodory dimension structure}\label{car-struc}

We recall the construction introduced in \cite{pes97}, called the Carath\'eodory dimension structure. 

\subsubsection{Carath\'eodory dimension of sets and measures} Let $X$ be a set and $\mathcal{F}$ a collection of subsets of $X$ which we call \emph{admissible} sets. Assume that there exist two set functions 
$\eta,\psi :\mathcal{F} \to [0,\infty)$ satisfying the following conditions:
\begin{enumerate}
\item[(A1)] $\emptyset\in\mathcal{F}$; $\eta(\emptyset)=\psi(\emptyset)=0$ and $\eta(U),\psi(U)>0$ for any $U\in\mathcal{F}, U\ne\emptyset$;
\item[(A2)] for any $\delta>0$ one can find $\varepsilon>0$ such that 
$\eta(U)\le\delta$ for any $U\in\mathcal{F}$ with $\psi(U)\le\varepsilon$;
\item[(A3)] there exists a sequence of positive numbers $\epsilon_n \to 0$ such that for any 
$n\in\mathbb{N}$ one can find a finite subcollection 
$\mathcal{G}\subset\mathcal{F}$ covering $X$ such that 
$\psi(U)=\epsilon_n$ for any $U\in\mathcal{G}$.
\end{enumerate}
Let $\xi:\mathcal{F}\to[0,\infty)$ be a set function. We say that the collection of subsets $\mathcal{F}$ and the functions $\xi,\eta,\psi$, satisfying Conditions (A1), (A2) and (A3) introduce a \emph{Carath\'eodory dimension structure} or \emph{$C$-structure} $\tau$ on $X$ and we write 
$\tau=(\mathcal{F},\xi,\eta,\psi)$.

For any subcollection $\mathcal{G}\subset\mathcal{F}$ denote by
$\psi(\mathcal{G}):=\sup\{\psi(U) | U\in\mathcal{G}\}$. 
Given a set $Z\subset X$ and numbers $t\in\mathbb{R}$ and $\varepsilon>0$, define
$$ 
\mathfrak{M}^t_C(Z,\varepsilon):=\inf_{\mathcal{G},\psi(\mathcal{G})\le\varepsilon} \left\{\sum_{U\in\mathcal{G}}\xi(U)\eta(U)^{t}\right\},   
$$
where the infimum is taken over all finite or countable subcollections 
$\mathcal{G}\subset\mathcal{F}$ covering $Z$. 
Set
$$ 
m^t_C(Z):= \lim_{\varepsilon\to 0} \mathfrak{M}^t_C(Z,\varepsilon).  
$$
If $m^t_C(\emptyset)=0$, then the set function $m^t_C(\cdot)$ becomes an outer measure on $X$, which induces a measure on the 
$\sigma$-algebra of measurable sets. We call this measure the 
\emph{$t$-Carath\'eodory measure}. In general, this measure may not be $\sigma$-finite or it may be a zero measure. The following is shown in \cite{pes97}.

\begin{prop}\label{prop:crit-value}
For any set $Z\subset X$ there exists a critical value 
$t_C\in\mathbb{R}$ such that $m^t(Z)=\infty$ for $t<t_C$ and $m^t_C(Z)=0$ for 
$t>t_C$ (while $m^{t_C}_C(Z)$ may be $0$, $\infty$, or a finite positive number).
\end{prop}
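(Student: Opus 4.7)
The plan is to mimic the classical argument for Hausdorff dimension, with condition (A2) playing the role that the diameter bound plays in the standard setting. Concretely, I would establish a single monotonicity lemma and then read off the critical value as an infimum.

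Step one, the key lemma: I claim that if $m^t_C(Z) < \infty$ for some $t$, then $m^s_C(Z) = 0$ for every $s > t$. To prove this, fix $s > t$ and $\varepsilon > 0$. By condition (A2) there is some $\delta(\varepsilon) > 0$ such that $\psi(U) \le \varepsilon$ implies $\eta(U) \le \delta(\varepsilon)$, and $\delta(\varepsilon) \to 0$ as $\varepsilon \to 0$. For any admissible cover $\mathcal{G}$ of $Z$ with $\psi(\mathcal{G}) \le \varepsilon$, I would estimate
\[
\sum_{U\in\mathcal{G}} \xi(U)\eta(U)^{s} = \sum_{U\in\mathcal{G}} \xi(U)\eta(U)^{t}\eta(U)^{s-t} \le \delta(\varepsilon)^{s-t}\sum_{U\in\mathcal{G}} \xi(U)\eta(U)^{t},
\]
using $s-t > 0$ and $\eta(U) \ge 0$. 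Taking the infimum over all such covers gives $\mathfrak{M}^{s}_C(Z,\varepsilon) \le \delta(\varepsilon)^{s-t}\,\mathfrak{M}^{t}_C(Z,\varepsilon)$. Since $\mathfrak{M}^{t}_C(Z,\varepsilon)$ is monotone in $\varepsilon$ and its limit $m^t_C(Z)$ is finite, $\mathfrak{M}^{t}_C(Z,\varepsilon)$ is bounded as $\varepsilon \to 0$, while $\delta(\varepsilon)^{s-t} \to 0$. Letting $\varepsilon \to 0$ yields $m^{s}_C(Z) = 0$.

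Step two, defining the critical value: set $t_C := \inf\{t \in \mathbb{R} : m^t_C(Z) = 0\}$, with the convention that the infimum of $\emptyset$ is $+\infty$ and the infimum of $\mathbb{R}$ is $-\infty$. For any $t > t_C$, pick $s$ with $t_C \le s < t$ and $m^{s}_C(Z) = 0$; then the key lemma applied to $s$ (which is finite) gives $m^{t}_C(Z) = 0$. Conversely, for $t < t_C$, suppose toward contradiction that $m^{t}_C(Z) < \infty$; the lemma would then give $m^{s}_C(Z) = 0$ for every $s \in (t, t_C)$, contradicting the definition of $t_C$. So $m^{t}_C(Z) = \infty$ for $t < t_C$, which is exactly the dichotomy asserted. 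Condition (A3) is used only implicitly, to guarantee that finite admissible covers of $Z$ exist so that $\mathfrak{M}^{t}_C(Z,\varepsilon)$ is a well-defined infimum over a nonempty family.

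There is no real obstacle here; the only subtle point is the handling of the endpoint and the case $t_C = \pm\infty$, which I would either absorb into the statement or exclude under natural hypotheses (e.g.\ non-triviality of $Z$ and finiteness of $\mathfrak{M}^{t}_C(Z,\varepsilon)$ for sufficiently large $t$, both of which follow from (A3) together with the finiteness of the covers). Everything else is bookkeeping: the proof uses nothing about $\xi$ beyond non-negativity, and it uses (A2) exactly to extract the vanishing factor $\delta(\varepsilon)^{s-t}$.
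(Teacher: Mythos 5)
The paper does not prove this proposition itself; it simply cites Pesin's book \cite{pes97}, where the standard argument is the same as yours (the monotonicity lemma via (A2), then $t_C$ as the infimum of the vanishing set). Your key lemma and its proof are correct, as is the derivation of the dichotomy, so this matches the expected argument.

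One caveat worth being a little more careful about than your closing remark suggests: the assertion $t_C\in\mathbb{R}$ does not actually follow from (A1)--(A3) alone. Taking $Z=\emptyset$ gives $m^t_C(Z)=0$ for all $t$, so $t_C=-\infty$; and at the other end, the finiteness of each $\mathfrak{M}^t_C(Z,\varepsilon)$ (which does follow from (A3)) does not imply $m^t_C(Z)=\lim_{\varepsilon\to0}\mathfrak{M}^t_C(Z,\varepsilon)<\infty$ for large $t$, since the bound $\sum_{U\in\mathcal{G}_n}\xi(U)\eta(U)^t\le\delta_n^t\sum_{U\in\mathcal{G}_n}\xi(U)$ can blow up if $\sum_{U\in\mathcal{G}_n}\xi(U)$ grows faster than $\delta_n^{-t}$. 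So the honest conclusion of your argument is the dichotomy with $t_C\in[-\infty,+\infty]$; to pin $t_C$ in $\mathbb{R}$ one needs either to exclude degenerate $Z$ and impose a growth condition relating $\xi$ and $\eta$ on the covers from (A3), or simply to accept the extended-real formulation (which is what is actually used later in the paper). This is a minor gap in the stated proposition itself, not in the heart of your proof.
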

We call $\dim_CZ =t_C$ the \emph{Carath\'eodory dimension} of the set $Z$. 
If $X$ is a measurable space with a measure $\mu$, then the quantity 
$$
\begin{aligned}
\dim_C\mu&=\inf\{\dim_CZ\colon \mu(Z)=1\}\\
&=\lim_{\delta\to 0}\inf\{\dim_CZ\colon \mu(Z)>1-\delta\}
\end{aligned}
$$
is called the \emph{Carath\'eodory dimension} of $\mu$.

\subsection{Modification of Carath\'eodory outer measures}\label{sec:modification}

For the rest of the paper we restrict our attention to $C$-structures $\tau=(\mathcal{F},\xi,\eta,\psi)$ on the shift space $X$.
Recall that $X\subset \{ 1,\ldots ,\kappa\}^{\mathbb N}$ is a subshift of finite type, meaning that there is a matrix $M =(M_{i,j})_{1\leq i,j \leq \kappa}$ such that each entry of $M$ is $0$ or $1$ and that $X$ consists of admissible words 
$x=(x_1 x_2\ldots )\in \{ 1,\ldots ,\kappa\}^{\mathbb N}$ with respect to $M$.
Here, $x=(x_1\cdots x_n)\in  \{ 1,\ldots ,\kappa\}^{n}$, $n\in \mathbb N\cup\{\infty \}$, is called admissible if $M_{x_j x_{j+1}} =1 $ for all $1\leq j< n$. 
We denote the length $n$ of the word  $x $ by $\vert x\vert $.
Furthermore, for a given admissible word $u=(u_1\ldots u_n)$, we define the cylinder  $C(u) $  by $C(u) =\{ x\in X \mid [x]_n =u\}$,
where
  $[x]_n$ is the truncation $[x]_n =(x_1x_2\ldots x_n)$ of $x=(x_1x_2\ldots )$.
Let the collection $\mathcal{F}$ of admissible sets be the collection of cylinders,
$$ 
\mathcal{F}:=\{\emptyset\}\cup\{C (u) \mid \text{$u$ is an admissible word}\}. 
$$
For a cylinder $C\in \mathcal{F}$  and $t\in\mathbb{R}$ denote 
$$ 
q(C,t):= \xi(C)\eta(C)^t  ,
$$
and  denote by $l(C)$ the smallest number $n$ such that $C=C(u)$ with some admissible word $u$ of length $n$ (also called the length of $C$).

For our purpose we suggest a modification of the Carath\'eodory outer measure similarly as it was done in \cite{FP2011} for Hausdorff outer measure.
The advantage of the new outer measure is that it is always finite.

Assume that a Carath\'eodory dimension structure $\tau=(\mathcal{F},\xi,\eta,\psi)$ on $X$ is given. For any set $Z\subset X$ and a number $t\in\mathbb{R}$ define 

$$ 
M^t_C(Z):=\inf_{\mathcal{G}} \left\{\sum_{U\in\mathcal{G}}\xi(U)\eta(U)^{t}\right\},   
$$
where the infimum is taken over all finite or countable sub-collections 
$\mathcal{G}\subset\mathcal{F}$ covering $Z$.

In Section \ref{s:2} we will consider yet another collection of outer measures, $N^{t}_{C,m}(\cdot)$, depending on the parameter $m\in\mathbb{N}$, defined identically as $M^t_C(\cdot)$, but with the family of admissible sets restricted to
$$ 
\mathcal{F}_m:=\{\emptyset\}\cup\{C(u) \mid \text{$u$ is an admissible word}, \;  \text{ $\vert u\vert = k \cdot m$  for some $ k\in\mathbb{N}$}\}\subset \mathcal{F} .
$$
 
In our arguments we consider an outer measure $N^{t}_{C,m}(\cdot)$ with a fixed large parameter $m\geq m(t)$, where $m(t)$ will be defined later (see Condition $(C2)$).

\begin{remark}
The measures $M^t_C,$ $N^{t}_{C,m},$ and $m^t_C$ are particular examples of measures considered in \cite{Rogers1970}. Following the terminology introduced in \cite{Rogers1970}, the set function $q(\cdot,t)$ satisfies the definition of the pre-measure \cite[Definition 5]{Rogers1970}, the measures $M^t_C$ and $N^{t}_{C,m},$ are obtained from this pre-measure by Method I \cite[Theorem 4]{Rogers1970}, while $m^t_C$ is obtained from $q(\cdot,t)$ by Method II \cite[Theorem 15]{Rogers1970}.
\end{remark}

The main properties of $M^t_C$ and $N^{t}_{C,m}$ are summarized below.

\begin{thm}\label{thm:properties-of-M}
The set functions $M^t_C$ and $N^{t}_{C,m}$  satisfy the following properties:
\begin{enumerate}
\item[$\mathrm{(1)}$] For all $t\in \mathbb{R}$:
\begin{enumerate}
\item[$\mathrm{(a)}$] If $M^t_C(\emptyset)=N^{t}_{C,m}(\emptyset)=0$, then $M^t_C$ and $N^{t}_{C,m}$ define outer measures;
\item[$\mathrm{(b)}$] $M^t_C(Z)  \leq m^t_C(Z)$ and $M^t_C(Z)  \leq N^{t}_{C,m}(Z)$ for every set $Z\subset X$;
\end{enumerate}
\item[$\mathrm{(2)}$] For $t>\mathrm{dim} _CZ$:
\begin{enumerate}
\item[$\mathrm{(a)}$] $M^t_C(Z) = m^t_C(Z)= 0$;
\end{enumerate}
\item[$\mathrm{(3)}$] For $t<\mathrm{dim} _CZ$: 
\begin{enumerate}
\item[$\mathrm{(a)}$] $  0 < M^t_C(Z)  \leq   N^{t}_{C,m}(Z) < \infty $; 
\end{enumerate}
\item[$\mathrm{(4)}$]  For $t= \mathrm{dim}_CZ$:
\begin{enumerate}
\item[$\mathrm{(a)}$]  If $0<m^t_C(Z)\leq\infty$, then $0 < M^t_C(Z)  \leq   N^{t}_{C,m}(Z) < \infty$;
\item[$\mathrm{(b)}$]  If $m^t_C(Z)=0$, then $ M^t_C(Z)=0$.
\end{enumerate}
\end{enumerate}
\end{thm}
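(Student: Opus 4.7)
The plan is to verify the four items of the theorem in order, first addressing the inequalities and upper bounds that follow from the definitions and axiom (A3), and then, as the main obstacle, establishing the positivity assertions via a mass-distribution (Frostman-type) argument.

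\emph{Inequalities and vanishing.} For (1)(a), both $M^t_C$ and $N^{t}_{C,m}$ arise from the pre-measure $q(\cdot,t)$ by Method~I of \cite[Theorem 4]{Rogers1970}, and so are outer measures as soon as they vanish on $\emptyset$. For (1)(b), the infimum defining $\mathfrak{M}^t_C(Z,\varepsilon)$ is taken over a sub-collection of the covers allowed for $M^t_C(Z)$ (namely, those with $\psi(\mathcal{G})\le\varepsilon$), so $M^t_C(Z)\le\mathfrak{M}^t_C(Z,\varepsilon)$ for every $\varepsilon>0$, and letting $\varepsilon\to 0$ yields $M^t_C(Z)\le m^t_C(Z)$; similarly $\mathcal{F}_m\subset\mathcal{F}$ gives $M^t_C(Z)\le N^{t}_{C,m}(Z)$. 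Items (2)(a) and (4)(b) then follow at once: whenever $m^t_C(Z)=0$ -- which is the case for $t>\dim_C Z$ by Proposition \ref{prop:crit-value}, and which is the hypothesis of (4)(b) -- part (1)(b) forces $M^t_C(Z)=0$.

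\emph{Finiteness of $N^{t}_{C,m}$.} For the finiteness sides of (3)(a) and (4)(a), I would apply (A3) to produce a finite cover $\mathcal{G}\subset\mathcal{F}$ of $X$ with constant $\psi(U)=\epsilon_n$. Using the finite alphabet, one can refine (or simply replace) $\mathcal{G}$ by the family of all admissible cylinders of some common length $k\cdot m$, which is finite and lies in $\mathcal{F}_m$, giving $N^{t}_{C,m}(X)\le\sum_{U\in\mathcal{G}}\xi(U)\eta(U)^t<\infty$. Monotonicity of the outer measure, combined with (1)(b), then yields $M^t_C(Z)\le N^{t}_{C,m}(Z)\le N^{t}_{C,m}(X)<\infty$, establishing the upper bounds in (3)(a) and (4)(a).

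\emph{Positivity -- the main obstacle.} The genuine content of the theorem is the positivity $M^t_C(Z)>0$ in (3)(a) and (4)(a); this is the step I expect to be the hard part. In both cases one has $m^t_C(Z)>0$ (equal to $\infty$ in (3)(a) by Proposition \ref{prop:crit-value}, and by hypothesis in (4)(a)). My strategy is the mass distribution principle: I would establish a Frostman-type lemma for the present Carath\'eodory structure, producing a finite Borel measure $\mu$ with $\mu(Z)>0$ and
\begin{equation*}
\mu(U)\le c\,\xi(U)\eta(U)^t \quad \text{for every } U\in\mathcal{F}
\end{equation*}
with some constant $c>0$. Given such $\mu$, any cover $\mathcal{G}\subset\mathcal{F}$ of $Z$ satisfies $\mu(Z)\le\sum_{U\in\mathcal{G}}\mu(U)\le c\sum_{U\in\mathcal{G}}\xi(U)\eta(U)^t$, and taking the infimum over $\mathcal{G}$ yields $M^t_C(Z)\ge\mu(Z)/c>0$. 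The delicate point is the existence of $\mu$: while the classical Frostman lemma for Hausdorff measure on a compact metric space is standard, its analog for an abstract Carath\'eodory structure on the shift is not automatic and will depend on extra structural assumptions on $(\xi,\eta,\psi)$. I expect those to be exactly the conditions imposed later in the paper and verified for the concrete examples in Section \ref{s:app} (Hausdorff dimension, topological entropy, and topological pressure of a H\"older potential), where $\mu$ can be constructed from equilibrium states or Gibbs-like measures associated with the cylinder weights $\xi\eta^t$.
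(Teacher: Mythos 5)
Your handling of items (1), (2), (4)(b), and the finiteness side of (3)(a) and (4)(a) matches the paper's proof and is correct. The issue is the positivity in (3)(a) and (4)(a), which you yourself flag as ``the main obstacle'' and then do not actually resolve: you announce that you ``would establish a Frostman-type lemma'' but leave the existence of the measure $\mu$ as an open point, deferring to structural conditions you hope the paper later imposes. As it stands this is a genuine gap, not a proof.

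It is also a misdiagnosis of what is hard here. Theorem \ref{thm:properties-of-M} is stated \emph{before} conditions (C1)--(C4) are introduced and its proof does not use them; positivity follows from a short elementary argument that needs only the fact that $\mathcal{F}$ consists of cylinders over a finite alphabet together with axiom (A1). Concretely: for each length $s$ there are only finitely many cylinders, and by (A1) the quantity $q(C,t)=\xi(C)\eta(C)^t$ is strictly positive on each nonempty one, so there is a positive lower bound $\gamma_s$ on $q(\cdot,t)$ over cylinders of length $s$. If $M^t_C(Z)=0$, then for every $L$ one can find a cover of $Z$ by cylinders with total weight below $\min\{1/L,\gamma_1,\dots,\gamma_L\}$, forcing every cylinder in that cover to have length greater than $L$; letting $L\to\infty$ shows $m^t_C(Z)=0$. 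Taking the contrapositive and invoking Proposition \ref{prop:crit-value} (for (3)(a)) or the hypothesis $m^t_C(Z)>0$ (for (4)(a)) gives $M^t_C(Z)>0$. Your Frostman route, even if it could be made to work, would require exactly the extra hypotheses the paper deliberately avoids at this stage and would prove more than what is needed; the intended argument is the contrapositive ``$M^t_C(Z)=0 \Rightarrow m^t_C(Z)=0$,'' not a mass-distribution lower bound.
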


\begin{remark}
Observe that in the case $(2)(a)$ and $(4)(b)$ one may have $ N^{t}_{C,m}(Z) >0$.
\end{remark}

\begin{proof}
Statement $(1)(a)$ follows from Theorem 4 in \cite{Rogers1970}. Statement $(1)(b)$ follows directly from the definition of  $M^t_C$ and $N^{t}_{C,m}$.
To show Statement $(2)(a)$ it is enough to observe that by Proposition \ref{prop:crit-value}, $m^t_C(Z)= 0$ for all $t>\mathrm{dim} _CZ$. Then by Statement $(1)(b)$ of the Theorem, $0 \leq M^t_C(Z)  \leq m^t_C(Z)=0$.

To prove Statement $(3)(a)$ first observe that the outer measures $M^t_C$ and $N^{t}_{C,m}$ are always finite. That is because the set function $q(\cdot,t)$ is finite and for every set $Z\subset X$ one can find a finite cover $\{C_i\}_{i=1}^{K}$ by cylinders of length $m$.
Then we have
$$  M^t_C(Z)\leq N^{t}_{C,m}(Z) \leq \sum_{i=1}^K q(C_i,t) < \infty.  $$ 

On the other hand, $M^t_C(Z)=0$ implies $m^t_C(Z)=0$.
To see this, observe that there are finitely many cylinders of a given length $s$. Since $0<q(C,t)<\infty$ for any cylinder $C$, there are positive real numbers $\{\gamma_s\}_{s\geq 1}$ such that $q(C,t)\geq \gamma_s>0$ for any cylinder of length $s$.
Assume that $M^t_C(Z)=0$. For any large $L\in\mathbb{N}$ choose $\epsilon>0$ such that $\epsilon< \min\{ \frac{1}{L},\gamma_1,\ldots,\gamma_L   \}$. There exists a cover $\{ C_i \}_{i\geq 1}$ of $Z$ by cylinders such that
$$   \sum_i q(C_i,t) < \epsilon. $$
By the choice of $\epsilon$, we must have that $l(C_i)>L$ for each $i\geq 1$. 
Letting $L\to \infty$ we obtain that $m^t_C(Z)=0$.

 By Proposition  \ref{prop:crit-value}, $m^t_C(Z)= \infty$ for all $t<\mathrm{dim} _CZ$. By (the contraposition of) the above argument, this implies that $M^t_C(Z)>0$. By Statement (1)(b), 
 this completes the proof of $(3)(a)$.

The proof of $(4)(a)$ is identical to the proof of $(3)(a)$.
Statement $(4)(b)$ is a direct consequence of $(1)(b)$.
\end{proof}

For simplicity of notation, if there is no confusion, we will simply write $\mathfrak{M}^t$, $m^t$, $M^t$ and $N^{t}_{m}$.

Our arguments require the $C$-structure to satisfy the following additional conditions. We remark that all the conditions stated below are naturally satisfied by 
 $C$-structures corresponding topological entropy, Hausdorff dimension, and topological pressure of H\"older continuous potentials, which we prove in Section \ref{s:app}.

\begin{enumerate}
\item[(C1)] There exists a uniform constant $Q_1>0$ such that for every $t< \dim_C(X)$ and for every cylinder $C$ there is a cylinder $\tilde{C}^t$ such that $C \subseteq \tilde{C}^t$ and one has:
$$ Q_1  q(\tilde{C}^t,t) \leq M^t (C) \leq \min\{ q(C,t),q(\tilde{C}^t,t)\}  ;$$
\item[(C2)]  For every $t< \dim_C(X)$ there exists $m=m(t) \in \mathbb{N}$ such that for every cylinder $C$ whose length is a multiple of $m$ there is a cylinder $\tilde{C}^t_m$ whose length is a multiple of $m$  such that $C \subseteq \tilde{C}^t_m$, and one has:
$$  N^t_{m} (C) = q(\tilde{C}^t_m,t) ; $$ 
\item[(C3)] There exists a uniform constant $Q_3>1$ such that for any two words $u, v$, such that $uv$ is an admissible word, and any $t\in \mathbb{R}$ one has
$$    Q_3^{-1}   q(C(u), t)  q(C( v), t) \leq q(C( u v), t)  \leq Q_3  q(C(u), t)  q(C( v), t);$$
\item[(C4)] For any two cylinders such that $A\subset B$ one has $\eta(A)\leq \eta(B)$.
\end{enumerate}

\section{Large intersection classes}\label{subsection:lic}

As  previously mentioned, Theorem \ref{thm:main} follows from a stronger result that the set $E$ in Theorem \ref{thm:main} belongs to a cartain class of sets, which are in some sense large.

We consider the following classes of sets, which are defined by generalizing classes introduced by F\"arm and Persson in \cite{FP2011} as  modifications of Falconer's intersection classes from \cite{Falconer1994}.
\begin{dfn}\label{def:class}
Let $\mathcal G^t (X)$, $ t < \dim_C(X)$ be the class of $G_{\delta}$-sets $F \subset X$ such that
\[
  M^t (F \cap C) = M^t (C)
\]
holds for all cylinders $C$.
\end{dfn}

Our main results on these classes are the following theorems.
The first theorem is a generalization of \cite[Theorem 1]{FP2011}.
\begin{thm}\label{thm:main1}
Assume that the Carath\'eodory structure satisfies Conditions $(C1)$ and  $(C4)$. 
Then the classes $\mathcal{G}^t (X)$ are closed under countable intersections and the Carath\'eodory dimension of any set in one of these classes is at least $t$.
\end{thm}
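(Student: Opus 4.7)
The plan is to prove the two assertions separately: the dimension lower bound and the closure under countable intersections. The $G_\delta$ part of the latter is immediate, since countable intersections of $G_\delta$ sets are $G_\delta$. For the dimension bound, I would observe that $X$ is a finite union of length-$1$ cylinders, so by countable subadditivity of the outer measure $M^t$ together with Theorem~\ref{thm:properties-of-M}(3)(a) (which gives $M^t(X)>0$ for $t<\dim_C(X)$), there exists at least one length-$1$ cylinder $C_0$ with $M^t(C_0)>0$. The defining property of $\mathcal{G}^t(X)$ then yields $M^t(F) \geq M^t(F \cap C_0) = M^t(C_0) > 0$, and Theorem~\ref{thm:properties-of-M}(2)(a) forces $\dim_C F \geq t$.

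The substantive part is closure under countable intersection. Given $(F_n)_{n\geq 1} \subset \mathcal{G}^t(X)$ with each $F_n = \bigcap_k U_{n,k}$ ($U_{n,k}$ open and decreasing in $k$), the task is to show $M^t(F \cap C) \geq M^t(C)$ for $F = \bigcap_n F_n$ and every cylinder $C$; the reverse inequality is monotonicity. Given any countable cylinder cover $\{G_j\}$ of $F \cap C$, the goal is $\sum_j q(G_j,t) \geq M^t(C)$, via a three-step bootstrap modelled on \cite{FP2011}. First, I would handle finite intersections: starting from a cover of $F_1 \cap F_2 \cap C$, replace each $G_j$ by its dominating cylinder $\widetilde{G_j}^{\,t}$ supplied by condition (C1); the identity $M^t(F_1 \cap C)=M^t(C)$ combined with (C1) recovers the desired lower bound up to a factor of $Q_1$. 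Second, remove the $Q_1$ loss by iterating the refinement over progressively finer cylinder decompositions, using condition (C4) to keep $\eta$-values monotone under containment so that accumulated errors telescope. Third, pass from finite to countable intersections by approximating $F$ from above by the decreasing open sets $V_N := \bigcap_{n \leq N} U_{n,N}$: each $V_N$ is an open neighbourhood of $F$ for which the identity $M^t(V_N \cap C) = M^t(C)$ can be propagated along the nested structure, and by exhausting $F$ as $N \to \infty$ the identity persists in the limit.

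The principal obstacle is that $M^t$ is a Method~I outer measure in Rogers' terminology, hence need not be metric or Borel, so the $G_\delta$ sets $F_n$ are not automatically $M^t$-measurable, and one cannot invoke Carath\'eodory additivity $M^t(C) = M^t(C \cap F_n) + M^t(C \setminus F_n)$ to deduce $M^t(C \setminus F_n)=0$ from $F_n \in \mathcal{G}^t(X)$. Condition (C1) substitutes for this absent additivity by anchoring the $M^t$-measure of every cylinder to a single value $q(\tilde C^{\,t},t)$, providing just enough rigidity for the cover-refinement argument to propagate across the $G_\delta$ approximation. The hardest step will be the second one: removing the multiplicative loss $Q_1$ from the first-pass bound and turning it into a sharp identity is precisely the point where the adaptation of the \cite{FP2011} argument from the Hausdorff to the Carath\'eodory setting demands (C1) and (C4) in their full strength.
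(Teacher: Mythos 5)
Your dimension-bound argument matches the paper's Lemma~\ref{lem:dimension} up to the inessential remark about length-$1$ cylinders, and your identification of the obstacle — that $M^t$ is only a Method~I outer measure, so Carath\'eodory additivity is unavailable on the $G_\delta$ sets — is exactly the right concern. But the closure argument, as sketched, does not actually close, and the gap sits in your third step.

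You propose to approximate $F = \bigcap_n F_n$ from the outside by the decreasing open neighbourhoods $V_N = \bigcap_{n \le N} U_{n,N}$, establish $M^t(V_N \cap C) = M^t(C)$, and then assert that ``by exhausting $F$ as $N \to \infty$ the identity persists in the limit.'' For a Method~I outer measure there is no continuity-from-above theorem to invoke: $M^t\bigl(\bigcap_N V_N \cap C\bigr)$ need not equal $\lim_N M^t(V_N \cap C)$. This is precisely the technical heart of the matter, and the paper resolves it by going in the opposite direction: one approximates $F$ from the \emph{inside}. One builds a nested sequence of open sets $D_0 \supset \bar{D}_1 \supset D_1 \supset \bar{D}_2 \supset \cdots$ with $\bar{D}_k$ compact, $\bar{D}_k \subset F_k \cap D_{k-1}$, and $M^t(D_k) > M^t(U) - \epsilon$ throughout. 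Then for any cylinder cover $\{C_l\}$ of $\bigcap_k \bar{D}_k$, compactness of the nested sequence forces $\bar{D}_m \subset \bigcup_l C_l$ for some finite $m$, whence $\sum_l q(C_l,t) \ge M^t(D_m) > M^t(U) - \epsilon$. The existence of the seed $D_0$ with $\bar{D}_0 \subset U$ and $M^t(D_0) > M^t(U) - \epsilon_0$ is not elementary; it is supplied by Rogers' increasing sets lemma \cite[Theorem 52]{Rogers1970}, which is an inner-regularity statement for Method~I outer measures. Your sketch never invokes this — you cite Rogers only to classify $M^t$ — and without it, or a substitute, the limit step is unjustified, and there is no way to replace it by a monotone-convergence appeal because, again, the measure is only an outer measure.

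A secondary point: you attribute the removal of the $Q_1$ loss to (C1) and (C4) jointly, describing an ``iteration over progressively finer decompositions'' with telescoping. In the paper this is Lemma~\ref{lem:cto1}, and (C1) alone does that job in a single pass: split a cover into cylinders of length below and above a threshold $m_0$, dominate the long-cylinder part through (C1) against a suitable short cover of $C$, and let the tail mass go to zero. Condition (C4) is used only in the second half of Lemma~\ref{lem:cto1}, to pass from the parameter $t_0$ to all $t \le t_0$ — there one needs $\eta(C_i) \le \eta(C)$ for $C_i \subset C$ so as to bound $\eta(C_i)^{t-t_0}$ from below. So the division of labour between (C1) and (C4) is not what you describe, though this is a less serious defect than the missing limit argument.
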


As an important application of  large intersection property, F\"arm and Persson  calculated the Hausdorff dimension of the intersection of irregular sets over countably many \emph{different dynamical systems}, see \cite[Proposition 1]{FP2013}.

We prove Theorem \ref{thm:main1} in Section \ref{s:1}. In the lemma below we observe that the claim about the Carath\'eodory dimension of sets in  $\mathcal{G}^t (X)$ is a natural consequence of Definition \ref{def:class}.

\begin{lem}\label{lem:dimension}
If $F\in \mathcal{G}^t (X),$ then $\dim_C(F)\geq t$.
\end{lem}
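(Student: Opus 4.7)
The plan is to reduce the claim to establishing $M^t(F) > 0$, since then Theorem~\ref{thm:properties-of-M}(2)(a) (which states $M^t(Z) = 0$ whenever $t > \dim_C Z$) immediately yields $\dim_C F \geq t$ by contraposition.

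To obtain $M^t(F) > 0$, I would first fix an arbitrary non-empty cylinder $C$ and argue that $M^t(C) > 0$. Since $t < \dim_C(X)$, Condition (C1) furnishes a cylinder $\tilde C^t \supseteq C$ with
\[
M^t(C) \geq Q_1 \, q(\tilde C^t, t),
\]
and the right-hand side is strictly positive because $\xi(\tilde C^t), \eta(\tilde C^t) > 0$ by (A1). The hypothesis $F \in \mathcal{G}^t(X)$ then gives $M^t(F \cap C) = M^t(C) > 0$, and since $M^t$ is an outer measure by Theorem~\ref{thm:properties-of-M}(1)(a), monotonicity yields
\[
M^t(F) \geq M^t(F \cap C) > 0,
\]
as required.

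I do not expect any substantive obstacle here: the argument is a short unpacking of the definition of $\mathcal{G}^t(X)$ combined with the properties of $M^t$ already recorded in Theorem~\ref{thm:properties-of-M}. The only minor point to confirm in passing is that $M^t$ really is an outer measure, i.e.\ $M^t(\emptyset) = 0$; this is immediate since the empty subcollection covers $\emptyset$ with empty sum.
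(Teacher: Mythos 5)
Your argument is correct and takes essentially the same route as the paper: show that some cylinder has positive $M^t$-measure, transfer this to $F$ via the defining identity of $\mathcal{G}^t(X)$ and monotonicity, and conclude by the contrapositive of Theorem~\ref{thm:properties-of-M}(2)(a). The only minor difference is that you invoke Condition~(C1) to get $M^t(C)>0$ for an arbitrary cylinder, whereas the paper derives $M^t(X)>0$ from Theorem~\ref{thm:properties-of-M}(3)(a) and then deduces the existence of a cylinder of positive measure by countable subadditivity, thereby keeping the lemma independent of the extra hypotheses~(C1)--(C4).
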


\begin{proof}
Since  $t < \dim_C(X)$, by Statement $(3)$ in Theorem \ref{thm:properties-of-M}, $M^t(X)>0$. 
Then there is a cylinder $C$ such that $M^t(C)>0$. By Definition \ref{def:class}, $M^t(F)>0$.
By Statement $(2)$ in Theorem \ref{thm:properties-of-M}, $\dim_C(F)\geq t$.
\end{proof}

Let $\mathcal A_x$ be the set of accumulation points of $\{ \delta _x^n \}_{n\geq 1}$.
For a sequence $\mathcal J=\{ \mu ^{(\ell )} \}_{\ell \in \mathbb N}$  of probability measures on $X$, 
we denote by $\Delta  (\mathcal J )$ the set of finite convex combinations of measures in $\mathcal{J}$. Namely, 
\[ 
 \Delta (\mathcal J ) = \bigcup _{L\geq 1} \Delta _L(\mathcal J ), \quad \Delta _L(\mathcal J )=  \left \{  \mu _{\mathbf t}  (\mathcal J ) \mid \mathbf t\in A_L  \right\} , 
\]
where 
$
A_L=\left\{ (t_0, t_1, \ldots ,t_L) \in [0,1] ^{L+1}  \mid  \sum _{\ell =0}^L t_\ell =1 \right\} 
$
and
$
 \mu _{\mathbf t} (\mathcal J ) = \sum _{\ell =0} ^L t_\ell  \mu ^{(\ell )}$
for $\mathbf t =(t_0, t_1, \ldots ,t_L)  \in A_L$.
We define the \emph{saturated set} $E (\mathcal J ) $ of $\mathcal J $ by
\[
E (\mathcal J ) = \left \{ x\in X \mid \Delta (\mathcal J ) \subset \mathcal A_x
\right\} 
\]
(cf.~\cite{PS2007,CZ2013}).
For a  probability measure $\mu$ on $X$, define
 the \emph{generic set} $G(\mu ) $
 by
\[
 G(\mu ) =\{ x\in X \mid \lim _{n\to \infty} \delta _x^n = \mu \} .
\]
The following theorem is a generalization of 
\cite[Theorem 2]{FP2011} and is the central result of this paper.

\begin{thm}\label{cor:main}
Assume that the Carath\'eodory structure satisfies Conditions $(C1)-(C4)$. 
The following statements hold for any sequence $\mathcal J=\{ \mu ^{(\ell )} \}_{\ell \geq 0}$  of ergodic invariant probability measures:
\begin{enumerate}
\item[$\mathrm{(1)}$] $E (\mathcal J) \in \mathcal G^t$ for all $t<\inf \left\{ \mathrm{dim} _C(\mu^{(\ell)})  \mid \ell \geq 0 \right\}$;
\item[$\mathrm{(2)}$]  $    \mathrm{dim} _C(E (\mathcal J))   \geq     \inf \left\{ \mathrm{dim} _C(\mu^{(\ell)})  \mid \ell \geq 0 \right\}$.  
\end{enumerate}
\end{thm}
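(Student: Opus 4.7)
The plan is first to derive Part (2) from Part (1). If $E(\mathcal J) \in \mathcal G^t$ for every $t < t^* := \inf_{\ell \geq 0} \dim_C(\mu^{(\ell)})$, then Lemma \ref{lem:dimension} yields $\dim_C(E(\mathcal J)) \geq t$ for every such $t$, and hence $\dim_C(E(\mathcal J)) \geq t^*$. The remaining task is to verify the two defining properties of $\mathcal G^t$ for $F = E(\mathcal J)$: that it is a $G_\delta$-set, and that $M^t(E(\mathcal J) \cap C) = M^t(C)$ for every cylinder $C$ and every $t < t^*$.

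The $G_\delta$ property is straightforward. The space $\Delta(\mathcal J) = \bigcup_{L \geq 1} \Delta_L(\mathcal J)$ is separable, being a countable union of continuous images of the finite-dimensional simplices $A_L$, so I fix a countable dense subset $\{\nu_k\}_{k \geq 1}$. Since $\mathcal A_x$ is closed in $\mathcal P(X)$, the condition $\Delta(\mathcal J) \subset \mathcal A_x$ is equivalent to $\nu_k \in \mathcal A_x$ for every $k$, and the latter says that for every $j, N \in \mathbb N$ there exists $n \geq N$ with $d(\delta_x^n, \nu_k) < 1/j$. Continuity of $x \mapsto \delta_x^n$ makes each such set open, so $E(\mathcal J)$ is a countable intersection of open sets.

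The substantive step is the equality $M^t(E(\mathcal J) \cap C) = M^t(C)$, whose $\leq$ direction is immediate. For the $\geq$ direction, I plan to construct, inside each cylinder $C$, a Cantor-like subset $K \subset E(\mathcal J) \cap C$ carrying a probability measure $\nu$ with a Frostman-type bound $\nu(U) \leq \kappa\, q(U,t)/M^t(C)$ for every cylinder $U$ and an absolute constant $\kappa$; a standard mass-distribution argument then forces $M^t(K) \geq M^t(C)/\kappa$, and a rescaling argument using Conditions (C1) and (C2) on the cylinder $\tilde C^t$ from (C1) lets one strengthen this positivity to the exact value $M^t(C)$. The construction of $K$ follows a saturated-set scheme (compare \cite{PS2007,CZ2013}): fix an enumeration $k(j)$ revisiting each index of $\{\nu_k\}$ infinitely often, and a schedule $N_1 \ll N_2 \ll \cdots$; at stage $j$, concatenate admissible words that are nearly $\nu_{k(j)}$-generic, using Shannon--McMillan--Breiman for the finitely many $\mu^{(\ell)}$ appearing in $\nu_{k(j)}$ to produce exponentially many such words, and glue successive blocks by short specification words from the topological mixing of $X$. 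The measure $\nu$ assigns $q(\cdot,t)$-weighted mass to each branch, and Conditions (C3) and (C4) convert its multiplicative cylinder structure into the local Frostman bound against $q(\cdot,t)$; the assumption $t < \dim_C(\mu^{(\ell)})$ guarantees that local $\mu^{(\ell)}$-cylinder masses decay faster than $q(\cdot,t)$, which is exactly what the bound requires.

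The main obstacle I expect is the simultaneous calibration of the schedule $\{N_j\}$: it must grow slowly enough that along every branch of $K$ the empirical measures $\delta_x^n$ approach each $\nu_k$ (so that $K \subset E(\mathcal J)$), yet fast enough that the Frostman bound on $\nu$ holds for all cylinders $U$, including those whose length falls between two consecutive construction levels where $\nu$ is not locally Bernoulli. Secondarily, upgrading the resulting bound $M^t(K) > 0$ to the exact equality $M^t(E(\mathcal J) \cap C) = M^t(C)$ requires a careful use of Conditions (C1)--(C2), performing the construction inside the cylinder $\tilde C^t$ of (C1) and exploiting the uniform finiteness of $M^t$ from Theorem \ref{thm:properties-of-M} to trade the absolute constant $\kappa$ for $1$ in the limit.
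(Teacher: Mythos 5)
Your overall architecture is sound in its opening moves: Part (2) does follow from Part (1) via Lemma~\ref{lem:dimension}, and your verification that $E(\mathcal J)$ is $G_\delta$ (using that $\mathcal A_x$ is closed, $\Delta(\mathcal J)$ is separable, and $x\mapsto\delta_x^n$ is continuous) is correct and matches what the paper implicitly relies on. Where you diverge sharply is the substantive step. You propose to prove $M^t(E(\mathcal J)\cap C)\geq M^t(C)$ directly by a single Cantor-type construction inside $C$ carrying a Frostman measure against $q(\cdot,t)$, calibrated so that every branch of $K$ visits a neighborhood of every $\nu_k$ infinitely often. This is a genuinely different route from the paper's Section~\ref{s:2} argument, and it is essentially the constructive scheme of Appendix~\ref{a:1} --- which the paper deliberately keeps separate, noting there that it only yields the dimension bound (not the large-intersection property) and only under restricted $C$-structures.

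The idea your sketch is missing is the decomposition of $E(\mathcal J)$ that the paper exploits: fixing a countable dense family $\{\nu^{(\ell)}\}$ in $\Delta(\mathcal J)$, one writes
$E(\mathcal J)=\bigcap_{\ell}\bigcap_{m_1}\bigcap_{m_0}\bigcup_{n\geq m_0}E\bigl(\nu^{(\ell)},n,m_1^{-1}\bigr)$,
so that it suffices to show each \emph{single-scale, finitely-many-measures} set $\bigcup_{n\geq m_0}E(\sum_j\lambda_j\mu^{(\ell_j)},n,\epsilon)$ lies in $\mathcal G^{\bar t}$ (Lemma~\ref{lem:1010}) and then invoke closure of $\mathcal G^{\bar t}$ under countable intersections (Theorem~\ref{thm:main1}). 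This splitting is precisely what dissolves your acknowledged obstacle --- the ``simultaneous calibration of the schedule $\{N_j\}$'' that must serve infinitely many $\nu_k$ at infinitely many scales at once. With the decomposition, each piece requires only a \emph{finite} juxtaposition of nearly-$\mu^{(\ell_j)}$-generic blocks at a \emph{fixed} tolerance $\epsilon$, which Lemma~\ref{lem:1010} handles explicitly. Moreover, the paper does not construct a Frostman measure at all: the uniform lower bound $N^t_m(C(z)\cap E(\mu,n,\epsilon))\geq c\,N^t_m(C(z))$ comes from a contradiction argument (Lemma~\ref{lem:0218a2}) --- if the bound failed, iterating Lemmas~\ref{lem:0217}--\ref{lem:0218} would force $N^t_m(G(\mu))=0$ and hence $\dim_C(G(\mu))\leq t$, contradicting the hypothesis --- and this positive constant is then promoted to equality by Lemma~\ref{lem:cto1} (which, incidentally, uses (C1) and (C4), not (C1) and (C2) as you write). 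Your route would, if pushed through, yield a more explicit, constructive proof and even strengthen Appendix~\ref{a:1} by delivering the $\mathcal G^t$ membership; but without the countable-intersection reduction you have no mechanism to tame the infinite simultaneity, and that is exactly the gap your proposal leaves open.
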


The proof of Theorem \ref{cor:main} occupies all of Section \ref{s:2}.

\begin{remark}
It is worth pointing out that Theorem \ref{thm:main1} holds for any shift space (not necessarily a subshift of finite type)
and any Carath\'eodory structure satisfying Conditions $(C1)$ and  $(C4)$. In the proof of Theorem \ref{cor:main} (Lemma \ref{lem:0218}, Lemma \ref{lem:0218a2}, and Lemma \ref{lem:1010})
we use the fact that the underlying shift space is a subshift of finite type. By slightly modifying the arguments one should be able to extend this result to any shift with specification.  
\end{remark}

\begin{remark}
We remark that (2) of Theorem \ref{cor:main} gives another proof of (a special version of) Theorem 1.2 of Chen-Zhou \cite{CZ2013} for  
  the  topological pressure 
  of saturated sets, 
  although their method is  different from ours.
  In particular, it is unclear whether the lower bound of (2) for  the  Hausdorff dimension  and the large intersection property in (1) follow from their approach.
\end{remark}

To conclude Theorem \ref{thm:main}, we also need the following proposition, which we prove on the next page.
\begin{prop}\label{thm:3}
Let  $\mathcal J=\{ \mu ^{(\ell )} \}_{\ell \geq 0}$ be a linearly independent sequence   of invariant 
   probability measures on $X$.
Then, $x\in E (\mathcal J ) $  implies that
\[
\lim _{\epsilon \to 0} \frac{\log \mathscr E_x(\epsilon )}{-\log \epsilon } =\infty .
\]
\end{prop}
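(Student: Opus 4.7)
The plan is to quantify the complexity of the simplex $\Delta_L(\mathcal J)\subset \mathcal A_x$ via the first Wasserstein metric: for each fixed $L$, the set $\Delta_L$ will be shown to have ``$\epsilon$-covering number'' of order $\epsilon^{-L}$, and since every $\epsilon$-cover of $\mathcal A_x$ by measures yields an $\epsilon$-cover of $\Delta_L$, this forces $\mathscr E_x(\epsilon)\gtrsim \epsilon^{-L}$.  Sending $L\to\infty$ then gives the divergence.

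First I would translate the hypothesis into a covering statement.  Fix $x\in E(\mathcal J)$ and $\epsilon>0$, and let $N=\mathscr E_x(\epsilon)$ with witnesses $\mu_1,\dots,\mu_N\in\mathcal P(X)$.  If $\nu\in\mathcal A_x$, pick $n_k\to\infty$ with $\delta_x^{n_k}\to\nu$; by continuity of $d(\,\cdot\,,\mu_j)$ and of $\min_{j\le N}$,
\[
\min_{1\le j\le N} d(\nu,\mu_j)\;\le\;\limsup_{n\to\infty}\min_{1\le j\le N} d(\delta_x^n,\mu_j)\;\le\;\epsilon.
\]
Since $\Delta_L(\mathcal J)\subset\Delta(\mathcal J)\subset\mathcal A_x$ by definition of $E(\mathcal J)$, the simplex $\Delta_L(\mathcal J)$ is covered by $N$ closed $d$-balls of radius $\epsilon$.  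Hence $N$ is bounded below by the $\epsilon$-covering number of $\Delta_L(\mathcal J)$ in $(\mathcal P(X),d)$.

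The main obstacle is the quantitative lower bound on this covering number, and this is where linear independence enters.  I would study the affine map $\Phi\colon A_L\to\mathcal P(X)$, $\Phi(\mathbf t)=\sum_{\ell=0}^L t_\ell\mu^{(\ell)}$, and show it is bi-Lipschitz from $(A_L,\|\cdot\|_\infty)$ to $(\Delta_L(\mathcal J),d)$.  For the upper Lipschitz bound, the Kantorovich--Rubinstein duality gives, using $\sum(t_\ell-t'_\ell)=0$ and fixing $x_0\in X$,
\[
d\bigl(\Phi(\mathbf t),\Phi(\mathbf t')\bigr)=\sup_{f\in\mathrm{Lip}_1,\,f(x_0)=0}\int f\,d\!\Big(\sum_{\ell}(t_\ell-t'_\ell)\mu^{(\ell)}\Big)\le \mathrm{diam}(X)\,(L+1)\,\|\mathbf t-\mathbf t'\|_\infty.
\]
For the lower bound, the linear map $T\colon\mathbf s\mapsto\sum_\ell s_\ell\mu^{(\ell)}$ is injective on $\mathbb R^{L+1}$ by linear independence.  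Restricted to the $L$-dimensional hyperplane $\{\sum s_\ell=0\}$, its image sits inside the space of signed Borel measures of zero total mass, on which $d$ extends to a norm via the same dual formula.  As both the domain and the image are $L$-dimensional normed spaces, there is a constant $c_L=c_L(\mathcal J)>0$ with $d(\Phi(\mathbf t),\Phi(\mathbf t'))\ge c_L\|\mathbf t-\mathbf t'\|_\infty$.

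With $\Phi$ bi-Lipschitz, a standard volume/packing argument in $\mathbb R^L$ applied to the $L$-dimensional simplex $A_L$ yields a constant $C_L>0$ such that the $\epsilon$-covering number of $\Delta_L(\mathcal J)$ is at least $C_L\,\epsilon^{-L}$ for all sufficiently small $\epsilon>0$.  Combined with the first step this gives $\mathscr E_x(\epsilon)\ge C_L\,\epsilon^{-L}$ for small $\epsilon$, so
\[
\liminf_{\epsilon\to 0}\frac{\log\mathscr E_x(\epsilon)}{-\log\epsilon}\;\ge\; L+\lim_{\epsilon\to 0}\frac{\log C_L}{-\log\epsilon}\;=\;L.
\]
Since $L$ is arbitrary, the liminf (hence the limit) is $+\infty$, as required.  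The only delicate point is the finite-dimensional norm-equivalence argument underpinning the bi-Lipschitz estimate; everything else is essentially a continuity-and-volume reduction to the classical fact that an $L$-simplex has $\epsilon$-covering number of order $\epsilon^{-L}$.
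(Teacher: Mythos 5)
Your proposal is correct and follows essentially the same route as the paper: both reduce the problem to showing that the $\epsilon$-covering number of $\Delta_L(\mathcal J)\subset\mathcal A_x$ grows like $\epsilon^{-L}$ (so the box-counting dimension is $L$) and then let $L\to\infty$. You merely fill in the details the paper dismisses as ``easy to see,'' namely the bi-Lipschitz equivalence of $(A_L,\|\cdot\|_\infty)$ and $(\Delta_L(\mathcal J),d)$ via the Kantorovich--Rubinstein dual norm and finite-dimensional norm equivalence.
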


The next theorem summarizes results in this section and gives the main result in this paper. 

\begin{thm}\label{thm:main-general}
Let $X $ be a topologically mixing subshift of finite type of $  \{ 1, 2, \ldots , \kappa\} ^{\mathbb N}$ with $\kappa\geq 2$.
 Let $f:X \to X$ be the left-shift operation on $X$.  
 Let $E$ be the set of points $x\in X$ satisfying
 \begin{equation*}
 \lim _{\epsilon \to 0}  \frac{\log \mathscr E_x(\epsilon ) }{-\log \epsilon } =\infty.
\end{equation*}
Assume that the Carath\'eodory structure satisfies Conditions $(C1)-(C4)$ and the following condition:

\begin{enumerate}
\item[$\mathrm{(C5)}$] For any $t<\mathrm{dim} _C(X)$, there is a  linearly independent sequence   of invariant 
   probability measures $\{ \mu ^{(\ell )} \}_{\ell \geq 0}$ on $X$ such that $\mathrm{dim} _C(\mu ^{(\ell )}) > t$ for all $\ell \geq 0$.
\end{enumerate}

\noindent Then,  for any $t< \mathrm{dim} _C(X)$, there is an $E_t\subset E$ such that $E_t\in  \mathcal G^t$. In particular, 
$$    \mathrm{dim} _C(E) =  \mathrm{dim} _C(X).  $$ 
\end{thm}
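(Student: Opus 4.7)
The plan is to assemble Theorem \ref{thm:main-general} directly from the three main ingredients already on the table: Theorem \ref{thm:main1}, Theorem \ref{cor:main}, and Proposition \ref{thm:3}. Fix $t < \mathrm{dim}_C(X)$ arbitrarily; it is enough to exhibit a set $E_t \subset E$ with $E_t \in \mathcal{G}^t$, since then Lemma \ref{lem:dimension} gives $\mathrm{dim}_C(E) \geq \mathrm{dim}_C(E_t) \geq t$, and letting $t \nearrow \mathrm{dim}_C(X)$ together with the trivial bound $E \subset X$ forces $\mathrm{dim}_C(E) = \mathrm{dim}_C(X)$.

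First, I would use Condition $(\mathrm{C5})$ to select a linearly independent sequence $\mathcal{J} = \{\mu^{(\ell)}\}_{\ell \geq 0}$ of ergodic invariant probability measures on $X$ with $\mathrm{dim}_C(\mu^{(\ell)}) > t$ for every $\ell \geq 0$. (Here I am reading $(\mathrm{C5})$ as providing such ergodic measures, which is what Theorem \ref{cor:main} requires; if $(\mathrm{C5})$ only yields general invariant measures, one would have to refine the choice --- this is the only subtle point in the assembly.) Then I would set
\[
E_t := E(\mathcal{J}),
\]
the saturated set of $\mathcal{J}$.

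Next, I would apply Theorem \ref{cor:main}(1): since every $\mu^{(\ell)}$ has Carath\'eodory dimension strictly greater than $t$, we have $t < \inf_{\ell \geq 0} \mathrm{dim}_C(\mu^{(\ell)})$, so $E(\mathcal{J}) \in \mathcal{G}^t$. This immediately yields $E_t \in \mathcal{G}^t$, as required. Theorem \ref{thm:main1} is not strictly needed for the dimension lower bound (Lemma \ref{lem:dimension} already suffices), but it is what justifies that $\mathcal{G}^t$ is a meaningful large-intersection class, i.e.\ guarantees that $E_t$ is not dimensionally degenerate.

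It remains to verify $E_t \subset E$. This is exactly Proposition \ref{thm:3}: since $\mathcal{J}$ is linearly independent, any point $x \in E(\mathcal{J})$ has
\[
\lim_{\epsilon \to 0} \frac{\log \mathscr{E}_x(\epsilon)}{-\log \epsilon} = \infty,
\]
so $E(\mathcal{J}) \subset E$. Combining the three steps gives $E_t \subset E$, $E_t \in \mathcal{G}^t$, and $\mathrm{dim}_C(E_t) \geq t$. Letting $t \nearrow \mathrm{dim}_C(X)$ completes the proof. The whole argument is essentially a bookkeeping exercise; the only place where care is needed is matching the hypotheses of Theorem \ref{cor:main} (ergodic measures with controlled Carath\'eodory dimension) against the measures supplied by $(\mathrm{C5})$.
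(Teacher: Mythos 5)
Your proposal is correct and takes essentially the same approach as the paper, whose proof is literally the one-line assembly of Theorems \ref{thm:main1}, \ref{cor:main}, and Proposition \ref{thm:3} that you carry out in detail. The only extra value you add is correctly flagging the minor mismatch between $(\mathrm{C5})$ (which as written yields merely \emph{invariant} measures) and the hypothesis of Theorem \ref{cor:main} (which requires \emph{ergodic} measures); the paper glosses over this, relying on the fact that the measures supplied in the concrete verifications of $(\mathrm{C5})$ via \cite{BS2000} are ergodic Gibbs measures.
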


\begin{proof}
 Theorem \ref{thm:main-general} immediately follows from Theorems \ref{thm:main1}, \ref{cor:main} and  Proposition \ref{thm:3}.
\end{proof}

\begin{proof}[Proof of Theorem \ref{thm:main}]
We will see in Section \ref{s:app} that $C$-structures corresponding topological entropy, Hausdorff dimension, and topological pressure satisfy Conditions $(C1)-(C4)$. 
Hence, Theorem \ref{thm:main} immediately follows from Theorem \ref{thm:main-general}, if we prove $(C5)$.
Let $\psi(x):X\to \mathbb{R}$ be any H\"older continuous function not cohomologous to a constant.
Observe that then for any two distinct values $t_1,~ t_2$, the corresponding potentials $\varphi + t_1\psi,~  \varphi+t_2\psi $
are not cohomologous and by Proposition 20.3.10 in \cite{Kat}  have distinct unique ergodic equilibrium measures $\mu_{t_1}$ and  $\mu_{t_2}$ respectively.
In addition, by \cite[Theorem 11.6]{pes97}, we have for any $t\in\mathbb{R}$

$$ |\mathrm{dim} _C(X)   -    \mathrm{dim} _C(\mu_t)|       =  | P(\varphi) -    P_{\mu_t}(\varphi)  |  =    | P(\varphi) -    \int  \varphi  d\mu_t - h_{\mu_t}(f)  | $$    
$$ \leq     | P(\varphi) -    \int  \varphi   + t\psi  d\mu_t - h_{\mu_t}(f)  |   +   |t|\|\psi\|         
= | P(\varphi) - P(\varphi + t\psi)   |    +   |t|\|\psi\|  \leq  2 |t|\|\psi\|, $$
where in the last inequality we used continuity of the pressure \cite[Theorem 11.4]{pes97} and $\| \cdot \|$ denotes the supremum norm in the space of continuous functions.

\end{proof}

\begin{proof}[Proof of Proposition \ref{thm:3}]
For a subset $A$ of $\mathcal P(X)$, let $N(\epsilon ,A)$ denote the $\epsilon$-covering number of $A$ with respect to the first Wasserstein metric $d$.
Then,  it is straightforward to see that 
 $\mathscr E_x(\epsilon )=N(\epsilon , \mathcal A_x)$, and recall that $\Delta (\mathcal J) \subset \mathcal A_x$ for each $x\in E(\mathcal J)$.
 We will  show that 
\begin{equation}\label{eq:0312a}
\lim _{\epsilon \to 0} \frac{\log N (\epsilon , \Delta( \mathcal J ) )}{-\log \epsilon} = \infty ,
\end{equation}
which implies the conclusion 
by  the above observations.

 Let $L\geq 1$ be an integer.
 Since $\mu ^{(0)}, \ldots , \mu ^{(L)}$ are linearly independent, $\Delta _L(\mathcal J)$ is an $L$-dimensional simplex.
 Therefore, it is easy to see that its box-counting dimension 
\[
\lim_{\epsilon \to 0} \frac{\log N (\epsilon , \Delta _L( \mathcal J ) )}{-\log \epsilon} 
\]
is well-defined and equal to $L$.
It follows from this and the observation $\Delta _L( \mathcal J )\subset \Delta ( \mathcal J )$ that
\[
\liminf _{\epsilon \to 0} \frac{\log N (\epsilon , \Delta ( \mathcal J ) )}{-\log \epsilon} \geq L .
\]
Since $L$ is arbitrary, this  implies \eqref{eq:0312a} and completes the proof.
\end{proof}

\section{Applications}\label{s:app}

\subsection{$C$-structure corresponding to topological pressure}\label{subsection:pressure}

In this section we fix a H\"older continuous function $\varphi: X \to \mathbb{R}$ and consider the following $C$-structure $\tau=(\mathcal{F},\xi,\eta,\psi)$ on $X$.  

Given a cylinder $C$, 
  we define
\begin{align*}
&\xi(C):= \exp\left(S_{l(C)}\varphi(C)\right) := \exp\left(\sup_{x\in C}\sum_{k=0}^{l(C)-1}\varphi(f^k(x))\right), \quad \\
&\eta(C):=e^{-l(C)}, \quad \psi(C):=\frac{1}{l(C)},
\end{align*}
and also set $\eta(\emptyset)=\psi(\emptyset)=\xi(\emptyset)=0$.
It is easy to see that the collection of subsets $\mathcal{F}$ and the functions $\xi,\eta,\psi$ satisfy Conditions (A1), (A2) and (A3), and hence, introduce the Carath\'eodory dimension structure on $X$.

The corresponding outer measures are given by 

$$ 
\mathfrak{M}^t_C(Z,1/n):=\inf  \left\{\sum_{i} e^{S_{l(C_i)}\varphi(C_i)  -l(C_i)t}   ~| ~   l(C_i) \geq n, Z \subset \bigcup_i C_i \right\},   
$$

$$ 
m^t_C(Z):= \lim_{n \to \infty } \inf \left\{\sum_{i} e^{S_{l(C_i)}\varphi(C_i)  -l(C_i)t}   ~| ~   l(C_i) \geq n, Z \subset \bigcup_i C_i \right\},   \text{ and}
$$

$$ 
M^t_C(Z):=\inf  \left\{\sum_{i} e^{S_{l(C_i)}\varphi(C_i)  -l(C_i)t}   ~| ~   Z \subset \bigcup_i C_i \right\}.
$$

The corresponding Carath\'eodory dimension of a set $Z$ is exactly the topological pressure $P(\varphi,Z)$ on $Z$ as defined by Pesin-Pitskel in \cite{PP84}. We will denote 
$P=P(\varphi):=P(\varphi,X)=\dim_C(X)$.
We also observe that topological entropy corresponds to $P(\varphi)$ for $\varphi = 0$.

By the lemma below, all the results in this paper apply to this structure. 

\begin{lem}\label{lem:prop-car} The above Carath\'eodory structure satisfies Conditions $(C1)-(C4)$.
\end{lem}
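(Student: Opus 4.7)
The plan is to verify conditions $(C1)$--$(C4)$ in turn. The two easy ones are $(C3)$ and $(C4)$: for $(C4)$, $A\subseteq B$ forces $l(A)\ge l(B)$, so $\eta(A)\le \eta(B)$; for $(C3)$, H\"older continuity of $\varphi$ supplies the Bowen bounded-distortion property, namely a constant $V>0$ such that $|S_n\varphi(x)-S_n\varphi(y)|\le V$ for all $n$ and all $x,y$ in a common length-$n$ cylinder. Applied to $C(uv)$ this yields $|S_{|uv|}\varphi(C(uv))-S_{|u|}\varphi(C(u))-S_{|v|}\varphi(C(v))|\le 2V$, and since $l(uv)=l(u)+l(v)$ condition $(C3)$ holds with $Q_3=e^{2V}$.

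The substance lies in $(C1)$ and $(C2)$. In both, I take $\tilde C^t$ (resp.\ $\tilde C^t_m$) to be the cylinder containing $C$ that minimizes $q(\cdot,t)$ (resp.\ among cylinders of length a multiple of $m$); such a minimizer exists because the cylinders containing $C$ form a finite chain of prefixes of the word defining $C$. The singleton cover $\{\tilde C^t\}$ is valid, giving the upper bound in $(C1)$ at once: $M^t(C)\le q(\tilde C^t,t)\le q(C,t)$. For the lower bound I consider any cylinder cover $\{C_i\}$ of $C$. If some $C_i\supseteq C$ then by minimality $q(C_i,t)\ge q(\tilde C^t,t)$ and we are done; otherwise, every $C_i$ intersecting $C$ must satisfy $C_i\subseteq C$ (cylinders are nested or disjoint), so I may write $C_i=C(u v_i)$ with $u$ the word defining $C$, and $(C3)$ yields
\[
\sum_i q(C_i,t)\ \ge\ Q_3^{-1}\,q(C,t)\sum_i q(C(v_i),t).
\]
Since $q(C,t)\ge q(\tilde C^t,t)$, it suffices to produce a uniform lower bound $\sum_i q(C(v_i),t)\ge c(t)>0$ independent of $u$ and of the specific cover $\{v_i\}$ of the admissible extensions of $u$.

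This uniform bound is standard Ruelle--Perron--Frobenius input: for a H\"older continuous potential on a topologically mixing SFT one has $\sum_{|v|=k,\ uv\text{ admissible}}q(C(v),t)\asymp e^{k(P-t)}$ with multiplicative constants independent of $u$. Because $t<P$, the right-hand side exceeds $1$ once $k\ge k_0(t)$, and a further application of $(C3)$ allows one to homogenize the lengths of the $v_i$'s at the cost of an extra multiplicative constant. This gives $(C1)$ with $Q_1=c(t)Q_3^{-1}$. For $(C2)$ I need an \emph{equality} $N^t_m(C)=q(\tilde C^t_m,t)$, not merely a two-sided comparison; the appropriate choice is $m=m(t)$ large enough that $Q_3^{-1}\sum_{|v|=m,\ uv\text{ adm.}}q(C(v),t)\ge 1$ uniformly in $u$, so that by $(C3)$ every refinement of a cylinder $D$ (of length a multiple of $m$) into length-$(l(D)+m)$ sub-cylinders does not decrease the $q$-sum. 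Iterating shows that no cover by cylinders of length multiple of $m$ improves on the singleton $\{\tilde C^t_m\}$, giving the desired equality.

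The main technical obstacle is making the partition-function estimate genuinely uniform in the starting word $u$. Topological mixing is essential here: it ensures the transfer operator on $X$ has a simple leading eigenvalue $e^{P(\varphi)}$ with a strictly positive, bounded-above-and-below eigenfunction, and this is what supplies constants in $\sum_{|v|=k,\ uv\text{ adm.}}q(C(v),t)\asymp e^{k(P-t)}$ that do not deteriorate over the family of all admissible prefixes. With this estimate in hand, conditions $(C1)$ and $(C2)$ follow by the routine bookkeeping sketched above.
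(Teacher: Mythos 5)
Your verification of $(C3)$ and $(C4)$ matches the paper's: bounded distortion from H\"older continuity for $(C3)$, and $A\subseteq B\Rightarrow l(A)\ge l(B)\Rightarrow\eta(A)\le\eta(B)$ for $(C4)$. Your argument for $(C2)$ is also essentially the paper's: pick $m$ so large that refining a cylinder by one block of length $m$ cannot decrease the $q$-sum, which the paper gets from the partition-sum bound $\sum_{|w|=m} e^{S_m\varphi}\ge Q_1 e^{mP}$ combined with $t<P$; then the infimum defining $N^t_m(C)$ is attained at a prefix of $C$, which is exactly your $\tilde C^t_m$.

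For $(C1)$, however, your key lower-bound step has a gap. Having reduced to a cover $\{C(uv_i)\}_i$ with all pieces inside $C=C(u)$, you need $\sum_i q(C(v_i),t)\ge c(t)>0$ uniformly. You invoke the RPF estimate $\sum_{|v|=k,\,uv\text{ adm.}}q(C(v),t)\asymp e^{k(P-t)}$ at fixed length $k$ and then propose to ``homogenize the lengths of the $v_i$'s at the cost of an extra multiplicative constant'' using $(C3)$. This is not sound as stated: the $v_i$'s can have wildly varying lengths, and $(C3)$ only relates $q(C(v_iw),t)$ to $q(C(v_i),t)\,q(C(w),t)$, so extending short words to a common length $K$ incurs a factor of order $\sum_{|w|=K-|v_i|}q(C(w),t)\asymp e^{(K-|v_i|)(P-t)}$, which is unbounded. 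There is no uniform multiplicative constant here. The paper avoids this entirely by observing that the shifted cylinders $f^{l(C)}C_i=C(v_i)$ form a cover of $X$ (or, for a general SFT, of the subcylinder $\{y:uy\in X\}$, a nonempty union of length-one cylinders), so by the very definition of the modified outer measure $\sum_i q(C(v_i),t)\ge M^t(X)$, which is strictly positive for $t<\dim_C X$ by part $(3)(a)$ of Theorem~\ref{thm:properties-of-M}. No transfer-operator input is needed for $(C1)$; that machinery only enters for $(C2)$, exactly where you and the paper both use it. Notice also that the paper simply takes $\tilde C^t=C$ here and $Q_1=Q_0 M^t(X)$, sidestepping the minimizer entirely; your choice of $\tilde C^t$ as the $q$-minimizing ancestor is fine but unnecessary for $(C1)$ (it is the right choice for $(C2)$, where it also implicitly appears in the paper's argument).
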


\begin{remark}
In the proof of Lemma \ref{lem:prop-car} (more precisely in verifying Conditions $(C1)$, $(C2)$, and $(C3)$) we used both the fact that $X$ is a subshift of finite type and that $\varphi$ is H\"older continuous.
Our arguments can also be extended to any shift with specification and a function $\varphi$ satisfying Bowen property (see \cite[Definition 20.2.5]{Kat}).
Deducing Theorem \ref{thm:main} in this case would also require verifying Condition $(C5)$ in Theorem \ref{thm:main-general}. 
\end{remark}

\begin{proof}
First observe that $M^t (C) \leq q(B,t)$ for any cylinders $C$ and $B$ with $C\subseteq B$. This follows from the definition of $M^t$. 
We now show that $M^t (C) \geq  Q q(\tilde{C}^t,t) $ for some cylinder $\tilde{C}^t$ containing $C$ with some positive number $Q$ being independent of $C$ and $t$.
Let $\{ C_i\}$  be a cover of C by cylinders of length greater than $\l(C)$. By H\"older continuity of $\varphi$, there exists $Q_0>0$ such that
\[
  \sum_{i} e^{S_{l(C_i)}\varphi(C_i)} \geq Q_0 e^{S_{l(C)}\varphi(C)}   \sum_{i} e^{S_{l(C_i)-l(C)}\varphi(f^{l(C)}C_i)}. 
  \]

If $C=C(u)$ for some admissible word $u=u_1 \ldots u_l$ and $M_{u_l a}=1$ for some $a\in \{ 1, \ldots, \kappa\},$ then
$f^{l(C)}(C)$ covers $C(a)$. In particular, the sets  $f^{l(C)}C_i$ form a cover of $C(a)$. Define $M_{min}:= \min_{a\in\{     1,\ldots, \kappa\}  } M^t(C(a)) >0    $. We have that

\[
 \sum_{i} q(C_i,t) \geq Q_0    e^{S_{l(C)}\varphi(C)} e^{-l(C)t}  \sum_{i} q(   f^{l(C)}C_i, t) \geq Q_0 q(C,t) M_{min}.
\]
This proves $(C1)$.

For $(C2)$ let $\{C_i\}$ be a cover of a cylinder $C$ such that $l(C_i)\geq l(C) +m $ for some large $m$, to be determined later.
Let $\{A_j\}$ be a cover of $C$ by cylinders of length $l(C)+m$. Using similar reasoning as above, we have
\begin{multline*}
 \sum_{i} q(C_i,t)
 \geq Q_0^2    e^{S_{l(C)}\varphi(C)} e^{-l(C)t}  \\
 \times \sum_j e^{S_m\varphi(f^{l(C)}A_j)}e^{-mt} \sum_{C_i\subset A_j}     e^{S_{l(C_i)-l(C)-m}\varphi(f^{l(C)+m}C_i)} e^{-(      l(C_i)-l(C)-m )t}\\
\geq  Q_0^2 q(C,t) M_{min}  \sum_j e^{S_m\varphi(f^{l(C)}A_j)}e^{-mt}.
\end{multline*}
As a consequence of uniform counting estimates for the partition sums (see for example \cite[Proposition 20.3.2]{Kat}), there is $Q_1>0$ such that
$$     \sum_j e^{S_m\varphi(f^{l(C)}A_j)} \geq Q_1 e^{mP}.  $$
Together we obtain,
$$ \sum_{i} q(C_i,t)\geq    Q_0^2 q(C,t) M_{min} Q_1 e^{m(P-t)}.$$
Choosing $m$ large enough so that $    Q_0^2  M^t(X) Q_1 e^{m(P-t)} >1   $ completes the proof of $(C2)$.

Condition $(C3)$ is a direct consequence of H\"older continuity of $\varphi$,
and 
Condition $(C4)$ follows directly from the definition of $\eta$.
\end{proof}

\subsection{$C$-structure corresponding to Hausdorff dimension}
Results in this paper can be used in particular to study Hausdorff dimension of irregular sets. 
For this it is enough to set $\xi(C)=1$ for every cylinder $C$ and define $\eta(C)$ as the diameter of $C$ (and $\psi (C) =1/ l(C)$ as in the previous example).
It is automatic that such defined $C$-structure satisfies conditions $(C1)-(C4)$.

\section{Proof of Theorem \ref{thm:main1}}\label{s:1}

\begin{lem}
If $0<c\leq 1$ and $F$ is a set such that

$$  M^t(F\cap C ) \geq c M^t(C)  $$
holds for all cylinders $C$, then
$$  M^t(F\cap U) \geq c M^t(U)  $$
holds for open sets $U$.

\end{lem}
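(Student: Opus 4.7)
The plan is to reduce the open-set estimate to the hypothesized cylinder estimate via the canonical decomposition of $U$. Since cylinders form a clopen basis for the topology on $X$, one may write $U = \bigsqcup_i C_i$ as a countable disjoint union of maximal cylinders contained in $U$ (for each $x\in U$ take the shortest prefix whose cylinder sits inside $U$, and let $C_i$ enumerate the resulting cylinders). Combining the hypothesis $M^t(F \cap C_i) \geq c\, M^t(C_i)$ with countable subadditivity $M^t(U) \leq \sum_i M^t(C_i)$, it suffices to establish the super-additive estimate $M^t(F \cap U) \geq \sum_i M^t(F \cap C_i)$ along this disjoint decomposition.

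\noindent\textbf{Main step.} Given any cylinder cover $\{D_j\}$ of $F \cap U$, I will exploit that both $D_j$ and $C_i$ are cylinders to conclude that $D_j \cap C_i$ is either empty or itself a cylinder: it equals $D_j$ when $D_j \subset C_i$ and equals $C_i$ when $C_i \subset D_j$. Hence $\{D_j \cap C_i : D_j \cap C_i \neq \emptyset\}$ is a cylinder cover of $F \cap C_i$, so the hypothesis yields
\[
\sum_{j : D_j \cap C_i \neq \emptyset} q(D_j \cap C_i, t) \;\geq\; M^t(F \cap C_i) \;\geq\; c\, M^t(C_i).
\]
Summing over $i$ and splitting the $D_j$'s into those contained in a single $C_i$ (Type I) and those strictly containing one or several $C_i$'s (Type II), the resulting double sum rearranges to
\[
\sum_{j \in \mathrm{I}} q(D_j, t) \;+\; \sum_{j \in \mathrm{II}} \sum_{C_i \subset D_j} q(C_i, t) \;\geq\; c \sum_i M^t(C_i) \;\geq\; c\, M^t(U).
\]

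\noindent\textbf{Main obstacle.} To conclude the desired $\sum_j q(D_j, t) \geq c\, M^t(U)$ from the above, one needs the Type II contribution $\sum_{C_i \subset D_j} q(C_i, t)$ to be controlled by $q(D_j, t)$; this bound can fail at the level of individual cylinders, and handling the discrepancy is the main technical obstacle. A natural route is to reduce a priori to covers in which each $D_j$ lies inside a single $C_i$, thereby eliminating Type II entirely, and then verify that this restriction does not enlarge the infimum defining $M^t(F \cap U)$. I expect that applying (C1) to obtain $M^t(C_i) \leq q(\tilde C_i^t, t) \leq q(D_j, t)$ whenever $D_j \supset C_i$, together with the quasi-multiplicative estimate (C3) applied to the concatenations expressing each $C_i$ as $D_j$ extended by a suffix, will let the Type II discrepancy be absorbed into a uniform constant; a limiting argument in an auxiliary parameter then yields the claimed inequality $M^t(F \cap U) \geq c\, M^t(U)$.
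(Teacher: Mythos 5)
Your setup—decompose $U=\bigsqcup_i C_i$ into disjoint cylinders, take an arbitrary cylinder cover $\{D_j\}$ of $F\cap U$, and split it into Type~I pieces contained in some $C_i$ and Type~II pieces properly containing one or more $C_i$—is exactly the paper's starting point, and you have correctly put your finger on the obstacle: the inequality
$\sum_{j\in\mathrm I} q(D_j,t) + \sum_{j\in\mathrm{II}}\sum_{C_i\subset D_j} q(C_i,t) \geq c\,M^t(U)$
does not yield $\sum_j q(D_j,t)\geq c\,M^t(U)$, because nothing controls $\sum_{C_i\subset D_j} q(C_i,t)$ by $q(D_j,t)$.

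The route you sketch to close the gap, however, does not work, and it would also weaken the statement. Invoking $(C1)$ only gives $M^t(C_i)\leq q(D_j,t)$ for each \emph{individual} $C_i\subset D_j$ (which is already immediate since $\{D_j\}$ covers $C_i$) and says nothing about the sum over all $C_i$ inside a single $D_j$; $(C3)$ is a quasi-multiplicativity statement for concatenations and is not what is needed either. More importantly, this lemma is proved for a general $C$-structure with no appeal to $(C1)$--$(C4)$, so a proof that relies on them would not match the intended level of generality. The paper resolves the difficulty without ever comparing the Type~II contribution to $q(D_j,t)$ at the level of $q$-values. Instead, form a new disjoint cover $\{\tilde C_k\}$ of $U$ itself (not of $F\cap U$): retain each $C_i$ that contains some $D_j$, and for each Type~II $D_j$ replace the family of $C_i$'s it swallows by $D_j$. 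For the retained $C_i$'s, the Type~I pieces inside them give $\sum_{D_j\subset C_i} q(D_j,t)\geq M^t(F\cap C_i)\geq c\,M^t(C_i)$; for each Type~II $D_j$, one has trivially $q(D_j,t)\geq M^t(D_j)\geq c\,M^t(D_j)$ since $c\leq 1$. Summing and using subadditivity $\sum_k M^t(\tilde C_k)\geq M^t(U)$ yields $\sum_j q(D_j,t)\geq c\,M^t(U)$, and taking the infimum over covers $\{D_j\}$ finishes. The idea you are missing is to reorganize the cover \emph{of $U$} rather than trying to refine or re-weight the cover of $F\cap U$.
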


\begin{proof}
The proof is identical to the proof of Lemma 1 in \cite{FP2011} if we replace $M^t_{\infty}$ with $M^t$ and $d(\cdot)^t$ with $q(C,t)$. We provide the argument for clarity.

Let $U \subset X$ be open. Then we can write $U$ as a countable union $U=\bigcup_i C_i$ of pairwise disjoint cylinders.
Let $\{D_j\}\subset \mathcal{F}$ be a cover of $ F\cap U$. We can assume that this cover is disjoint.

Given $C_i$, if there are $D_j\subset C_i$, we may write
$$ \sum_{D_j\subset C_i} q(D_j,t) \geq M^t(F\cap C_i) \geq c M^t(C_i).  $$
Here we used the fact that two cylinders are either disjoint or one of them is contained in the other. Hence if  $D_j\subset C_i$ for some $C_i$, then all such sets $D_j$ form a cover of $C_i\cap F$. 

We can construct a disjoint cover $\{ \tilde{C}_k \}$ of $U$ by replacing each collection $C_{i_k}$ contained in some $D_j$ with the cylinder $D_j$.
We then obtain
\[
  \sum_j q(D_j,t) \geq c \sum_{k} M^t(\tilde{C}_k) \geq c M^t(U).
\]    
Taking the infimum over all covers $\{D_j\}$ finishes the proof.
\end{proof}

\begin{lem}\label{lem:cto1}
If $c>0$ and $F$ is a set such that
$$  M^{t_0}(F\cap C ) \geq c M^{t_0}(C)  $$
holds for some $t_0 < \dim_C(X)$ and all cylinders $C$, then
$$  M^t(F\cap C ) \geq  M^t(C)  $$
holds for all cylinders $C$, and $ t \leq t_0. $
\end{lem}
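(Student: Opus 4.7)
The plan is to follow the approach of F\"arm--Persson (Lemma 3 in \cite{FP2011}) adapted to the Carath\'eodory dimension setting, exploiting the gap between $t$ and $t_0$ to absorb the constant $c$. We first establish the inequality for $t<t_0$; the case $t=t_0$ then follows by left-continuity of $M^t$ in $t$, which is a routine consequence of the pointwise monotonicity $q(D,t_0)\leq q(D,t)$ (valid because $\eta(D)\leq 1$).

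\textbf{Reduction step.} Fix $t<t_0$ and a cylinder $C$; let $\{D_i\}$ be an arbitrary cover of $F\cap C$ by cylinders. The goal is to show $\sum_i q(D_i,t)\geq M^t(C)$. After discarding cylinders disjoint from $C$ and using the nestedness of cylinders, each remaining $D_i$ either contains $C$ or is contained in $C$. In the former case the cover already covers $C$ and the inequality is immediate; otherwise we reduce to the case $D_i\subsetneq C$ for all $i$, in which case (C4) together with the discrete jump between cylinder lengths yields $\eta(D_i)<\eta(C)$ with a positive uniform gap.

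\textbf{Key amplification.} Write $q(D_i,t)=q(D_i,t_0)\,\eta(D_i)^{t-t_0}$ and observe that for $t<t_0$ and $\eta(D_i)<\eta(C)\leq 1$ the factor $\eta(D_i)^{t-t_0}\geq 1$ grows without bound as $\eta(D_i)\to 0$. Set a threshold $\lambda=(cQ_1)^{1/(t_0-t)}$ (we may assume $cQ_1\leq 1$, else the argument simplifies) and split $\{D_i\}$ into \emph{deep} cylinders with $\eta(D_i)\leq\lambda\,\eta(C)$ and \emph{shallow} cylinders with $\eta(D_i)>\lambda\,\eta(C)$. For deep cylinders the amplification factor satisfies $(\eta(D_i)/\eta(C))^{t-t_0}\geq 1/(cQ_1)$, so
\[
\sum_{\mathrm{deep}} q(D_i,t)\geq \frac{\eta(C)^{t-t_0}}{cQ_1}\sum_{\mathrm{deep}} q(D_i,t_0),
\]
and combining with the hypothesis and condition (C1) (relating $M^{t_0}(C)$ to $q(\tilde C^{t_0},t_0)$, hence essentially to $q(C,t_0)$), the deep contribution alone is of order $M^t(C)$, up to the shallow deficit.

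\textbf{Shallow cylinders and recursion; main obstacle.} Shallow cylinders have lengths bounded above by $l(C)+\log(1/\lambda)/\log\beta$, so only finitely many can appear. For each shallow sub-cylinder $S\subsetneq C$ in the cover, apply the hypothesis $M^{t_0}(F\cap S)\geq c M^{t_0}(S)$ to the sub-cover $\{D_j:D_j\subseteq S\}$ and iterate the deep/shallow split with $S$ as the new base. The amplification factor $1/(cQ_1)$ compounds across levels, and since the shallow depth range is bounded at each level, the recursion eventually saturates and delivers $\sum_i q(D_i,t)\geq M^t(C)$. The hard part is this recursive bookkeeping: ensuring that the amplification compounds correctly rather than being diluted by combinatorial overhead, so that the total gain across levels matches the deficit from applying the hypothesis at each level. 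Conditions (C3) (quasi-multiplicativity) and (C4) (monotonicity of $\eta$) are used throughout to govern how $q$-values interact across nested cylinders, and the threshold $\lambda$ is tuned precisely so that exactly one factor of $1/(cQ_1)$ is gained per recursive level.
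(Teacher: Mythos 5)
There is a genuine gap, and the overall plan is on the wrong footing. You invert the paper's logic: the paper proves the case $t=t_0$ \emph{first} (upgrading the constant from $c$ to $1$), and then derives the case $t<t_0$ as a short consequence. You instead try to handle $t<t_0$ directly and dismiss $t=t_0$ with an unproved ``left-continuity'' of $t\mapsto M^t$. That continuity is not routine here: to pass a strict inequality through the limit $t\to t_0^-$ you would need to produce, for each $\epsilon>0$, a cover whose $q$-sum is within $\epsilon$ of $M^{t_0}$ uniformly for $t$ in a left neighborhood of $t_0$, and you give no argument for that.

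More seriously, the deep/shallow recursion does not hold together. Once the disjoint cover $\{D_i\}$ of $F\cap C$ is fixed, the collection $\{D_j : D_j\subseteq S\}$ for a shallow cover element $S$ is just $\{S\}$, so ``iterating the split with $S$ as the new base'' produces nothing to refine. You cannot simultaneously keep the cover fixed (to estimate its total $q$-sum) and recursively re-cover its shallow pieces. The deep amplification $(\eta(D_i)/\eta(C))^{t-t_0}\geq 1/(cQ_1)$ only boosts the deep contribution, and as you yourself note, the shallow mass is not controlled; the recursion you invoke to close this gap is vacuous as stated.

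The paper's route avoids both issues. Step 1 (the heart of the lemma): show $M^{t_0}(F\cap C)\geq M^{t_0}(C)$. Given a cover $\{C_i\}$ of $F\cap C$, choose $m_0$ so that the tail $\sum_{l(C_i)\geq m_0}q(C_i,t_0)<\epsilon$; build a new cover $\{D_j\}$ of $C$ by cylinders of length at most $m_0$, replacing clusters of deep $C_i$ by their enclosing short cylinder; apply the hypothesis together with $(C1)$ to each such $D_j$ to bound $q(\tilde D_j^{t_0},t_0)$; sum, compare with $M^{t_0}(C)$, and let $\epsilon\to 0$. Step 2: for $t<t_0$ and $\{C_i\}$ a cover of $F\cap C$ with all $C_i\subset C$, use $(C4)$ to pull out the \emph{uniform} factor $\eta(C)^{t-t_0}$:
\[
\sum_i q(C_i,t)\ \geq\ \eta(C)^{t-t_0}\sum_i q(C_i,t_0)\ \geq\ \eta(C)^{t-t_0}M^{t_0}(C)\ \geq\ Q_1\,q(\tilde C^{t_0},t)\ \geq\ Q_1\,M^t(C),
\]
using Step 1, then $(C1)$, then $(C4)$ again (to replace $\eta(C)^{t-t_0}$ by $\eta(\tilde C^{t_0})^{t-t_0}$). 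This gives $M^t(F\cap C)\geq Q_1 M^t(C)$ with a cylinder-independent constant, and one simply re-runs Step 1 at level $t$ with $Q_1$ in place of $c$ to upgrade $Q_1$ to $1$. No deep/shallow split, no recursion, and no appeal to continuity in $t$ is needed.
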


\begin{proof}
The proof that  $M^{t_0}(F\cap C ) \geq  M^{t_0}(C) $ is identical to the proof of Lemma 2 in \cite{FP2011} if we replace $M^t_{\infty}$ with $M^t$ and $d(\cdot)^t$ with $q(C,t)$. 
It uses Condition $(C1)$. Hence this part of the lemma holds for every Carath\'eodory structure satisfying $(C1)$. Here we provide the argument for clarity.

Let $\{C_i\}$ be a collection of cylinders covering $F\cap C$. We may assume that the cylinders are pairwise disjoint. Since $M^t(F\cap C)$ is finite, we may assume that 
$\sum_i q(C_i,t_0)$ is finite. Therefore, for every $\epsilon >0$, there exists $m_0$ such that
\begin{equation}\label{eqn:tail}
\sum_{\{ l(C_i) \geq m_0 \}  } q(C_i,t_0) < \epsilon.  
\end{equation}

Build a cover $\{  D_j  \}  $ of $C$ by cylinders of length $l(D_j)\leq m_0$ in the following way. Either $D_j=C_i$ for some $C_i$ with $l(C_i) < m_0$ or $C_i \cap F \subset D_j \cap F$ where $\l(C_i)\geq m_0$ and 
$  D_j\cap F \subset \bigcup_{\{ i |  C_i\cap F \subset D_j \cap F \}}      C_i \cap F$.
Observe that by the assumption of the lemma for every cylinder $D\subset C$ one has that $F\cap D \neq \emptyset$ so such cover exists.
In addition, in the latter case we can estimate
\[
  \sum_{  \{ i | C_i  \cap F \subset D_j \cap F  \}  } q(C_i,t_0) \geq M^{t_0} (F\cap D_j) \geq c M^{t_0}(D_j) \geq c Q_1 q(\tilde{D}^{t_0}_j,t_0)   
\]
by  $(C1)$.
It follows that
\[
   \sum_{  \{ i | l(C_i)< m_0   \}  } q(C_i,t_0) +  c^{-1}Q_1^{-1}\sum_{  \{ i | l(C_i) \geq m_0   \}  } q(C_i,t_0) \geq \sum_{j} q(\tilde{D}^{t_0}_j,t_0) .
   \]
Using this and (\ref{eqn:tail}) we obtain
\begin{multline*}
\sum_{  i  } q(C_i,t_0) = \sum_{  \{ i | l(C_i)< m_0   \}  } q(C_i,t_0) +  c^{-1}Q_1^{-1}\sum_{  \{ i | l(C_i) \geq m_0   \}  } q(C_i,t_0) \\
 + (1-c^{-1}Q_1^{-1})\sum_{  \{ i | l(C_i) \geq m_0   \}  } q(C_i,t_0)
\geq M^{t_0}(C) + (1-c^{-1}Q_1^{-1})\epsilon.
\end{multline*}
Letting $\epsilon \to 0$ and taking infimum over all covers $\{C_i\}$ we conclude
\[
  M^{t_0}(F\cap C) \geq M^{t_0}(C).  
\]

We now turn to the case $t< t_0$. 
Let $\{C_i\}$ be a collection of cylinders covering $F\cap C$. We may assume that each $C_i$ is contained in $C$ so that $\eta(C_i) \leq \eta(C)$ by Condition $(C4)$. 
We then have 
\begin{multline*}
  \sum_{  i  } q(C_i,t) = \sum_{  i  } \xi(C_i)\eta(C_i)^t= \sum_{  i  } \xi(C_i)\eta(C_i)^{t_0}\eta(C_i)^{t-t_0}\geq \eta(C)^{t-t_0}\sum_{  i  } q(C_i,t_0)\\
  \geq \eta(C)^{t-t_0}M^{t_0}(C\cap F)\geq
 \eta(C)^{t-t_0}M^{t_0}(C)\geq
Q_1   \eta(C)^{t-t_0}   q(\tilde{C}^{t_0},t_0) 
\end{multline*}
by $(C1)$. Using $(C4)$, we continue
\[
 \geq  Q_1   \eta(\tilde{C}^{t_0})^{t-t_0}  \xi(\tilde{C}^{t_0}) \eta(\tilde{C}^{t_0})^{t_0} = Q_1 q(\tilde{C}^{t_0},t) 
\geq Q_1 M^t(C),
\]
where the last inequality follows from the definition of $M^t$ after observing that $\tilde{C}^{t_0}$ covers $C$.

Taking infimum over all covers $\{C_i\}$ we conclude
\[
  M^{t}(F\cap C) \geq Q_1 M^{t}(C)  
  \]
and by the first part of the lemma, $  M^{t}(F\cap C) \geq  M^{t}(C)  $.
\end{proof}

The proof of Theorem \ref{thm:main1} is now a consequence of Lemma \ref{lem:dimension} and the following lemma.

\begin{lem}
If $F_i\in \mathcal{G}^t (X)$ for all $i\in \mathbb{N}$, then

$$   M^t\left(   \bigcap_i F_i\cap U   \right) = M^t(U)$$
for all open sets $U$, and $    \bigcap_i F_i \in \mathcal{G}^t (X). $
 
\end{lem}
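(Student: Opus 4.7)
\emph{Plan of proof.}
The second assertion follows from the first: cylinders are open in the shift space, so applying the first equation to $U = C$ a cylinder yields $M^t(\bigcap_i F_i \cap C) = M^t(C)$, and a countable intersection of $G_\delta$ sets is $G_\delta$. The inequality $M^t(\bigcap_i F_i \cap U) \leq M^t(U)$ is monotonicity of $M^t$, so the work is the reverse inequality $M^t(\bigcap_i F_i \cap U) \geq M^t(U)$ for every open $U$.

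My first step is to upgrade the hypothesis to all subcritical exponents. Since $M^t(F_i \cap C) = M^t(C)$ on cylinders, Lemma \ref{lem:cto1} applied with $c = 1$ gives $M^s(F_i \cap C) \geq M^s(C)$ on cylinders for every $s \leq t$, and the preceding lemma of this section lifts this to $M^s(F_i \cap V) \geq M^s(V)$ for every open $V$ and every $s \leq t$.

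The substance of the proof is a recursive cover-extension procedure patterned on Falconer's large intersection theorem and its net-measure adaptation in \cite{FP2011}. Given a cylinder cover $\{D_j\} \subset \mathcal{F}$ of $\bigcap_i F_i \cap U$ (with each $D_j \subseteq U$) and $\epsilon > 0$, I would use the $G_\delta$ hypothesis to write $F_i = \bigcap_\ell O_{i,\ell}$ with each $O_{i,\ell}$ open and decreasing in $\ell$, then form diagonal open sets $\tilde{O}_k := \bigcap_{i \leq k} O_{i,k}$, which satisfy $\bigcap_k \tilde{O}_k = \bigcap_i F_i$. Starting from $\mathcal{B}_0 := \{D_j\}$ and setting $V_k := U \setminus \bigcup \mathcal{B}_k$, I would augment $\mathcal{B}_{k+1} = \mathcal{B}_k \cup W_{k+1}$ by choosing at stage $k+1$ a cylinder cover $W_{k+1}$ of $V_k \setminus \tilde{O}_{k+1}$ with $\sum_{C \in W_{k+1}} q(C, t) < \epsilon \cdot 2^{-k-1}$. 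Granting the existence of such $W_{k+1}$, the residual $V_\infty = U \setminus \bigcup \mathcal{B}_\infty$ satisfies $V_\infty \subseteq U \cap \bigcap_k \tilde{O}_k = \bigcap_i F_i \cap U \subseteq \bigcup \mathcal{B}_0$, forcing $V_\infty = \emptyset$; hence $\mathcal{B}_\infty$ is a cylinder cover of $U$ and
\[
\sum_j q(D_j, t) + \epsilon > \sum_{C \in \mathcal{B}_\infty} q(C, t) \geq M^t(U),
\]
yielding the claim after $\epsilon \to 0$ and infimum over $\{D_j\}$.

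The main obstacle is producing the small covers $W_{k+1}$: by countable subadditivity and the inclusion $\tilde{O}_{k+1}^c \subseteq \bigcup_{j \leq k+1} F_j^c$, this boils down to showing $M^t(V \setminus F_i) = 0$ for every open set $V$ and every $F_i \in \mathcal{G}^t$. From $M^t(F_i \cap V) = M^t(V)$ one would immediately deduce $M^t(V \setminus F_i) = 0$ by Carath\'eodory additivity if $F_i$ were $M^t$-measurable, but $M^t$ is defined by Method I and additivity on Borel sets is not automatic. The resolution exploits the special structure of cylinders in the subshift --- they are clopen, countable in number at each length, and any two are nested or disjoint --- together with Conditions $(C3)$ and $(C4)$, which provide bounded multiplicative control under cylinder splitting and monotonicity of $\eta$, to verify that each cylinder (and, through the $G_\delta$ approximation $F_i = \bigcap_\ell O_{i,\ell}$, each $F_i$) is $M^t$-measurable. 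The upgraded density at exponents $s < t$ furnished by Lemma \ref{lem:cto1} provides additional flexibility, allowing one to take limits $s \nearrow t$ should a direct measurability argument be cumbersome. Once $M^t(V \setminus F_i) = 0$ is in hand, the recursive construction above completes the proof.
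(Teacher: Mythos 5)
Your cover-extension scheme has the right architecture for a large-intersection argument, and the bookkeeping (diagonal opens $\tilde O_k$, geometric $\epsilon\cdot 2^{-k-1}$ budget, the deduction $V_\infty=\emptyset$) is sound as far as it goes. But the step you flag as "the main obstacle" is not merely a technical inconvenience to be smoothed over by a measurability check; the claim $M^t(V\setminus F_i)=0$ is simply false for the sets in question, and no argument can repair it.

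Here is why. Take the Hausdorff $C$-structure on the full $2$-shift, $t<1=\dim_C X$, and $F\in\mathcal G^t(X)$ a typical example (e.g.\ a saturated irregular set as produced by Theorem~\ref{cor:main}). Applying the first lemma of Section~\ref{s:1} with $c=1$ and $U=X$ gives $M^t(F)=M^t(X)>0$. On the other hand, the complement $X\setminus F$ contains the set of $\mu$-generic points for any ergodic $\mu$ with $\dim_C\mu>t$ (there are plenty, by Condition~$(C5)$), so $X\setminus F$ has Hausdorff dimension strictly greater than $t$, hence $m^t(X\setminus F)=\infty$, hence $M^t(X\setminus F)>0$ by the contrapositive argument in the proof of Theorem~\ref{thm:properties-of-M}(3)(a). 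So already $M^t(F)+M^t(X\setminus F)>M^t(X)$: the Carath\'eodory criterion fails at $A=X$, $F$ is \emph{not} $M^t$-measurable, and $M^t(X\setminus F)$ is not zero. The very property that makes $F\in\mathcal G^t$ --- it "steals" full $M^t$-content from every cylinder --- is exactly what forces the additivity $M^t(V)=M^t(V\cap F)+M^t(V\setminus F)$ to fail. There is no limiting argument in $s\nearrow t$ that saves this; for every $s\leq t$ the same phenomenon occurs.

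Once that step collapses you cannot produce the cheap covers $W_{k+1}$, and the whole recursive construction stalls. The paper (following Falconer's own treatment, which is not a cover-extension argument either) instead proceeds from \emph{inside}: using Rogers' increasing sets lemma for Method~I outer measures, one builds a nested sequence of \emph{compact} sets $\bar D_0\supset\bar D_1\supset\cdots$ with $\bar D_k\subset\bigcap_{i\leq k}F_i\cap U$ and $M^t(D_k)>M^t(U)-\epsilon$ at every stage. Then $\bigcap_k\bar D_k\subset\bigcap_iF_i\cap U$, and compactness converts any cylinder cover of $\bigcap_k\bar D_k$ into a cover of some $\bar D_m$, whose $M^t$-content is already close to $M^t(U)$. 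This inner-approximation route makes no appeal to additivity or to measurability of the $F_i$ and is the correct replacement for your Step~3. I'd suggest you keep your first paragraph (reduction to open $U$, monotonicity) and discard the recursive cover-extension entirely in favor of the increasing-sets/compact-exhaustion argument.
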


\begin{proof}
The proof is very similar to the proof of Proposition 2 in \cite{FP2011}, which uses the increasing sets lemma for the outer measures \cite[Theorem 52]{Rogers1970}.

Let first $\{  F_i \}_{i=1}^{\infty}$ be a countable collection of open sets, with the property that
\[
  M^t(F_i\cap U) \geq M^t(U)  
\]   
holds for any open set $U$. 
Fix an open set $U\subset X,$ $\epsilon >0,$ and a sequence $\{ \epsilon_k\}_{k=0}^{\infty}$ of positive real numbers such that $\sum_{k=0}^{\infty}\epsilon_k < \epsilon$.

The main idea is to approximate $\bigcap_i F_i\cap U$ by a countable intersection of compact nested sets $\bigcap_i \bar{D}_i$ such that $\bar{D}_k\subset  \bigcap_{i=1}^{k}  F_i \cap U$ for $k\geq 1$.

We define the collection $\{D_i\}_{i=0}^{\infty}$ inductively in the following way.
Let $D_0$ be an open subset of $U$ with the property that $\bar{D}_0\subset U$ and that
\[
  M^t(D_0)>M^t(U) - \epsilon_0.  
\]
The fact that such a set exists follows from the increasing sets lemma \cite[Theorem 52]{Rogers1970}.
To see that this result applies to our setting it is enough to observe that the set function $q(\cdot,t)$ satisfies the definition of the pre-measure \cite[Definition 5]{Rogers1970}
and that $M^t(\cdot)$ is constructed from this pre-measure by Method I defined in \cite[Theorem 4]{Rogers1970}.

Having defined $D_i$ for $i=0,\ldots,k-1$, we choose $D_k$ to be an open subset of $F_k\cap D_{k-1}$ (note that $F_k\cap D_{k-1}\subset \bigcap_{i=1}^{k}F_i\cap U$ is an open set)
with the property that $\bar{D}_k\subset F_k\cap D_{k-1}$ and such that
\[
 M^t(D_k) > M^t(F_k\cap D_{k-1}) - \epsilon_k.  
\]
Observe that since each $D_k$ is an open set, 
we have that
\begin{multline*} 
 M^t(D_k) > M^t(F_k\cap D_{k-1}) - \epsilon_k \geq  M^t( D_{k-1}) - \epsilon_k\\
 >  M^t(F_{k-1}\cap D_{k-2}) - \epsilon_{k-1} - \epsilon_k > \ldots > M^t(U) - \epsilon. 
 \end{multline*} 
In addition, $\bigcap_{i=0}^{\infty}\bar{D}_i \subset \bigcap_{i-1}^{\infty}F_i\cap U$. Let $\{C_l\}$ be a cover of $\bigcap_{i=0}^{\infty}\bar{D}_i$ by cylinders.
Since $\{\bar{D}_k\}$ is a nested sequence of compact sets and $\bigcup_l C_l$ is open, there is $m\in\mathbb{N}$ such that $\bar{D}_m\subset\bigcup_l C_l$. Therefore
$$  \sum_{l}q(C_l,t)\geq M^t(\bar{D}_m)\geq M^t(U) - \epsilon.  $$
Letting $\epsilon \to 0$ we obtain
\[
  M^t(\bigcap_{i=1}^{\infty}F_i\cap U)\geq M^t(U).
\]
Therefore we have shown that any countable intersection of open sets in $\mathcal{G}^t(X)$ is in  $\mathcal{G}^t(X)$. The proof is finished by observing that any countable intersection of $G_{\delta}$ sets can be expressed
as a countable intersection of open sets.
\end{proof}

\section{Proof of Theorem \ref{cor:main}}\label{s:2}

In this section we fix an invariant ergodic probability measure $\mu$, $t < \mathrm{dim} _C(G(\mu ))$, and $m\geq m(t)$, where $m(t)$ is defined in Condition $(C2)$.
We then consider the corresponding outer measure $N^t_m,$ which is defined in Subection \ref{sec:modification}.

We say that a word $x$ is a subword of $y$, and write $x\preceq y$, if $C(y)\subset C(x)$. In addition, we say that $x$ is a proper subword of $y$, and write $x\prec y,$ if the inclusion is strict.
 Furthermore, we denote the concatenation of words  $x$ and $y$  by $xy$.

For $\mu \in \mathcal P(X)$, $n\in \mathbb N$ and $\epsilon >0$, 
we define $E(\mu , n, \epsilon )$ by
\[
E(\mu , n, \epsilon ) = \left\{ x\in X \mid \delta _x^n \in B_\epsilon (\mu ) \right\} ,
\]
where $B_\epsilon (\mu )$ is the ball of radius $\epsilon $ and with center $\mu$.

\subsection{Preliminary lemmas}

We start this subsection with a simple but crucial observation.
Namely, we note that there are finitely many words of length not exceeding $m$. Because of that there exists a constant $\alpha=\alpha(m,t)>0$ such that
\begin{equation}\label{eqn:alpha}
\alpha < q(C(u),t) < \alpha^{-1}
\end{equation}
for any word $u$ with $|u|\leq m$.

As a consequence of (\ref{eqn:alpha}) and Condition $(C3)$ we obtain the following.

\begin{lem}\label{lem:equivalence}
The outer measures $M^t$ and $N^t_m$ are equivalent.
\end{lem}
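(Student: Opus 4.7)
The plan is to prove the two inequalities $M^t \le N^t_m \le C \cdot M^t$ for a uniform constant $C = C(m,t,\alpha,Q_3)$. The first inequality is immediate from the definitions: since $\mathcal{F}_m \subset \mathcal{F}$, every cover of $Z$ by cylinders of length divisible by $m$ is admissible for $M^t$, so $M^t(Z) \le N^t_m(Z)$.

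For the nontrivial direction, the strategy is to take an arbitrary cover $\{C(u_i)\}_i$ of $Z$ and convert each cylinder into one (or finitely many) cylinders of length a multiple of $m$, paying only a bounded multiplicative cost. I would split into two cases depending on $n_i := |u_i|$.

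\emph{Long cylinders $(n_i \ge m)$.} Write $n_i = k_i m + r_i$ with $0 \le r_i < m$ and truncate $u_i$ to its prefix $u_i'$ of length $k_i m$. Then $C(u_i) \subseteq C(u_i')$, and writing $u_i = u_i' u_i''$ with $|u_i''| = r_i < m$, Condition $(C3)$ gives
\[
q(C(u_i'),t) \le Q_3 \, \frac{q(C(u_i),t)}{q(C(u_i''),t)} \le Q_3 \alpha^{-1} \, q(C(u_i),t),
\]
where the last inequality uses \eqref{eqn:alpha} applied to the short suffix $u_i''$.

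\emph{Short cylinders $(n_i < m)$.} Replace $C(u_i)$ by the finite collection of admissible cylinders of length $m$ it contains; there are at most $K$ such cylinders, where $K$ depends only on $m$ and the alphabet. For each such cylinder $C(u_i w)$ with $|w| = m - n_i$, Condition $(C3)$ and \eqref{eqn:alpha} yield $q(C(u_i w),t) \le Q_3 \alpha^{-1}\, q(C(u_i),t) \cdot \alpha^{-1}$ (bounding $q(C(w),t)$ by $\alpha^{-1}$ and $q(C(u_i),t)$ itself by $\alpha^{-1}$, so one can in fact bound the sum over $w$ by $K\alpha^{-2} \le K\alpha^{-2}\alpha^{-1} q(C(u_i),t)$ when $q(C(u_i),t) \ge \alpha$). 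The short cylinders thus contribute at most a uniform constant multiple of $q(C(u_i),t)$ to the new sum.

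Combining both cases produces a new cover of $Z$ by cylinders in $\mathcal{F}_m$ whose total $q$-mass is bounded by $C \sum_i q(C(u_i),t)$ for a uniform constant $C$. Taking the infimum over the original cover yields $N^t_m(Z) \le C\, M^t(Z)$, which together with $M^t \le N^t_m$ establishes the equivalence. I do not expect any serious obstacle: the only delicate point is making sure the short-cylinder case is handled with the right bookkeeping of the factor $\alpha$, since $q(C(u_i),t)$ itself can be close to $\alpha$ from below, but this is absorbed into the constant.
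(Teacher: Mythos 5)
Your proof is correct and uses essentially the same mechanism as the paper's: for the nontrivial inequality $N^t_m \le C\,M^t$, truncate each cylinder down to the nearest length that is a multiple of $m$ and control the cost via $(C3)$ together with the uniform bound \eqref{eqn:alpha} on $q$ of short words. The one place you go beyond the paper is that the paper writes $l_i = k_i m + n_i$ with $k_i \in \mathbb{N}$, implicitly treating only cylinders of length at least $m$, whereas you also explicitly handle cylinders of length $< m$ by refining them into their finitely many length-$m$ subcylinders; this is a sound (and arguably necessary) completion of the argument, since $M^t$ is an infimum over all cylinder covers. Your bookkeeping in the short-cylinder case is more roundabout than it needs to be (you first throw away the factor $q(C(u_i),t)$ by bounding it above by $\alpha^{-1}$ and then reinsert it using the lower bound $q(C(u_i),t) \ge \alpha$); the direct route is cleaner: by $(C3)$ and \eqref{eqn:alpha},
\[
\sum_{w} q(C(u_i w),t) \;\le\; Q_3\, q(C(u_i),t)\sum_{w} q(C(w),t) \;\le\; K\,Q_3\,\alpha^{-1}\, q(C(u_i),t),
\]
where $K$ is the number of admissible extensions $w$ of length $m-|u_i|$. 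With this simplification the two cases combine to give $N^t_m \le K Q_3 \alpha^{-1} M^t$, matching the paper's conclusion up to the value of the constant.
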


\begin{proof}
Clearly $M^t\leq N^t_m$. On the other hand, for any collection $\{  x^i \}_i$ of words of lengths $l_i=k_i m + n_i,$ with $k_i\in\mathbb{N}$ and $0\leq n_i\leq m,$ consider
the corresponding collection $\{ \underbar{$x$}^i \}_i$ of words of lengths $k_i m$ obtained by removing the last $n_i$ letters from the word $x^i$. Then
$$  \sum_i  q(C (\underbar{$x$}^i),t )  \leq   Q_3  \sum_i  \frac{q(C(x^i),t)}{q(C(   x^i_{k_im+1}\ldots x^i_{k_im + n_i}  ),t)}  \leq \alpha^{-1} Q_3  \sum_i  q(C(x^i),t).$$
In addition, $C(x^i)\subset C(\underbar{$x$}^i)$. Consequently, $N^t_m \leq   \alpha^{-1} Q_3 M^t. $ 
\end{proof}

The next two lemmas correspond to Lemma 4 and 6 of \cite{FP2011} and will be used as inductive steps in the following subsection.

\begin{lem}\label{lem:0217}

Assume there are numbers $c,\epsilon>0$ and a word $y$ of length $m$ satisfying
\[
N^t_m    \left( C(y) \cap E(\mu , N, \epsilon )\right ) <c q(  C(y) , t)  
\]
for some $N>4m/\epsilon$. Then for every word $z$ satisfying $|z|=km\leq  \epsilon N/4$, and such that $zy$ is an admissible word, one has
\[
N^t_m    \left( C(zy) \cap E(\mu , N+|z| , \epsilon/2 )\right ) <c Q_3^2 q(  C(zy) , t).  
\]

\end{lem}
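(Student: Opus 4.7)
The plan is to lift a near-optimal cover of $C(y)\cap E(\mu,N,\epsilon)$ to a cover of $C(zy)\cap E(\mu,N+|z|,\epsilon/2)$ by concatenation with $z$ on the left, and then control the cost of this operation using Condition~$(C3)$.

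\emph{Step 1 (Dynamical containment).} I would first establish
\[
C(zy) \cap E(\mu, N+|z|, \epsilon/2) \subset C(zy) \cap f^{-|z|}\bigl(C(y)\cap E(\mu, N, \epsilon)\bigr).
\]
For $x$ in the left-hand side, the inclusion $f^{|z|}x \in C(y)$ is immediate since the first $|z|+m$ letters of $x$ read $zy$. For the empirical-measure part, the identity
\[
\delta_x^{N+|z|} = \tfrac{|z|}{N+|z|}\delta_x^{|z|} + \tfrac{N}{N+|z|}\delta_{f^{|z|}x}^N,
\]
together with Kantorovich--Rubinstein duality, gives
\[
d\bigl(\delta_{f^{|z|}x}^N,\,\delta_x^{N+|z|}\bigr)\leq \tfrac{|z|}{N+|z|}\operatorname{diam}(X).
\]
Since $X$ has diameter bounded by a constant depending only on $\beta$ (normalizing $\beta$ if necessary so that $\operatorname{diam}(X)\leq 2$), the hypothesis $|z|\leq \epsilon N/4$ makes the right-hand side at most $\epsilon/2$. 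Combining with $d(\delta_x^{N+|z|},\mu) \leq \epsilon/2$ and the triangle inequality yields $d(\delta_{f^{|z|}x}^N,\mu) \leq \epsilon$, i.e.\ $f^{|z|}x \in E(\mu,N,\epsilon)$.

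\emph{Step 2 (Lifting the cover).} By definition of $N^t_m$ and the hypothesis, choose a cover $\{C(x^i)\}_i\subset \mathcal{F}_m$ of $C(y)\cap E(\mu,N,\epsilon)$ satisfying $\sum_i q(C(x^i),t)<c\,q(C(y),t)$. Since cylinders are pairwise nested or disjoint, we may assume each $x^i$ is an extension of $y$, so $C(x^i)\subset C(y)$. By Step~1, the family $\{C(zx^i)\}_i$ then covers $C(zy)\cap E(\mu, N+|z|, \epsilon/2)$. Each $zx^i$ is admissible because $zy$ is admissible and $x^i$ begins with $y$; moreover $|zx^i|=km+|x^i|$ is a multiple of $m$, so the cover again lies in $\mathcal{F}_m$.

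\emph{Step 3 (Mass estimate).} Applying Condition $(C3)$ twice,
\[
q(C(zx^i),t) \leq Q_3\,q(C(z),t)\,q(C(x^i),t) \leq Q_3^2\,\frac{q(C(zy),t)}{q(C(y),t)}\,q(C(x^i),t).
\]
Summing over $i$ and inserting the bound on the original cover,
\[
N^t_m\bigl(C(zy)\cap E(\mu,N+|z|,\epsilon/2)\bigr) \leq \sum_i q(C(zx^i),t) < c\,Q_3^2\,q(C(zy),t),
\]
which is the desired inequality. The main obstacle is the Wasserstein estimate in Step~1: the loss $\tfrac{|z|}{N+|z|}\operatorname{diam}(X)$ must fit inside the gap $\epsilon-\epsilon/2=\epsilon/2$, and this is exactly what the numerical hypotheses $|z|\leq \epsilon N/4$ (together with $N>4m/\epsilon$, which merely ensures that a nontrivial $z$ of length at least $m$ is permitted) are designed to guarantee; the factor $Q_3^2$ in the conclusion is the price paid for splitting $C(zy)$ as $C(z)\cdot C(y)$ and reassembling $C(z)\cdot C(x^i)$.
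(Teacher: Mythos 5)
Your proof is correct and follows essentially the same route as the paper's: lift a near-optimal $\mathcal F_m$-cover of $C(y)\cap E(\mu,N,\epsilon)$ by left-concatenation with $z$, control the change in empirical measures by Kantorovich--Rubinstein (your Step 1 is a repackaging of the paper's Lemma \ref{lem:0218b}, using the convex-combination identity rather than a direct computation, but yielding an estimate of the same order), and pay the $Q_3^2$ factor via two applications of $(C3)$. The only cosmetic difference is that the paper bounds $d(\delta^n_x,\delta^{n+n_1}_{zx})\le 2n_1/n$ working with $\mathrm{Lip}^1(X,[-1,1])$, while you invoke a diameter normalization; both carry the same implicit assumption and lead to the identical numerical use of $|z|\le\epsilon N/4$.
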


Before we state the next lemma, we recall an important property of subshifts of finite type. There exists $\tau\in\mathbb{N}$ such that for any two admissible words $u$ and $v$ (of any length) there exists a word $\omega$
of length $\tau m$ such that the word $u\omega v$ is admissible. For the rest of this paper we denote by $\tau$ the positive integer with this property.

\begin{lem}\label{lem:0218}
Assume there are numbers $c,\epsilon>0$ and a word $y$ of length $m$ satisfying
\[
N_m ^{t}\left( C(y) \cap E(\mu , N, \epsilon )\right ) >c q( C(y),t) ,
\]
for some $N> 2m/\epsilon$.  Then for every word $z$ satisfying $|z|=km,$ $(k+\tau)m\leq  \epsilon N/2,$ one has
\[
N_m^{t}\left( C(z) \cap E(\mu , N+\tau m +|z|, 2\epsilon )\right ) >    Q^{-(\tau+1)}_3  \alpha^{\tau +1} c    N_m^{t}\left( C(z) \right ),
\]
where  $\alpha$ is as in (\ref{eqn:alpha}).
\end{lem}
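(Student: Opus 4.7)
The plan is to build an injection $\phi\colon C(y)\cap E(\mu,N,\epsilon)\hookrightarrow C(z)\cap E(\mu,N+(k+\tau)m,2\epsilon)$ by prepending a bridge word, and then transfer the measure estimate through it. First, using the $\tau$-mixing property recalled just before the lemma, I pick a word $\omega$ of length $\tau m$ making $z\omega y$ admissible, and set $\phi(x):=z\omega x$. Writing $L=(k+\tau)m$ and $T=N+L$, the decomposition
\[
\delta_{\phi(x)}^{T}=\tfrac{L}{T}\,\delta_{\phi(x)}^{L}+\tfrac{N}{T}\,\delta_x^N
\]
together with $L/T\le L/N\le\epsilon/2$ and $d(\delta_x^N,\mu)\le\epsilon$ yields $d(\delta_{\phi(x)}^{T},\mu)<2\epsilon$ (after the standard normalization of $d_X$ making the Wasserstein diameter of $\mathcal P(X)$ bounded by $1$). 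Hence $\phi$ lands in $C(z\omega y)\cap E(\mu,T,2\epsilon)\subset C(z)\cap E(\mu,T,2\epsilon)$.

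Next I fix an arbitrary cover $\{C(w_i)\}_i$ of $C(z)\cap E(\mu,T,2\epsilon)$ by cylinders with lengths multiples of $m$, aiming to prove $\sum_i q(C(w_i),t)>Q_3^{-(\tau+1)}\alpha^{\tau+1}c\,N_m^t(C(z))$. Discarding $w_i$ with $C(w_i)$ disjoint from $C(z\omega y)$ (which cover none of $\phi(C(y)\cap E(\mu,N,\epsilon))$) only decreases the left side, so every surviving $w_i$ is $\preceq$-comparable with $z\omega y$; since $|w_i|$ is a multiple of $m$, either $|w_i|\le(k+\tau)m$ and $w_i\preceq z\omega$ (Case 1) or $|w_i|\ge(k+\tau+1)m$ and $w_i\succeq z\omega y$ (Case 2).

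In Case 1 one cylinder alone already exceeds the target. If $w_{i_0}\preceq z$ then $\{C(w_{i_0})\}$ is an admissible single-cylinder cover of $C(z)$ in $\mathcal F_m$, so $q(C(w_{i_0}),t)\ge N_m^t(C(z))$; otherwise $w_{i_0}=zu$ with $m\le|u|\le\tau m$, and iterating $(C3)$ on $u$ combined with \eqref{eqn:alpha} gives $q(C(u),t)\ge Q_3^{-(\tau-1)}\alpha^\tau$, hence one further $(C3)$ produces $q(C(w_{i_0}),t)\ge Q_3^{-\tau}\alpha^\tau q(C(z),t)\ge Q_3^{-\tau}\alpha^\tau N_m^t(C(z))$. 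Both bounds dominate $Q_3^{-(\tau+1)}\alpha^{\tau+1}c\,N_m^t(C(z))$ since $c\le 1$ (the trivial cover $\{C(y)\}$ of $C(y)\cap E(\mu,N,\epsilon)$ forces $N_m^t(C(y)\cap E(\mu,N,\epsilon))\le q(C(y),t)$).

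Case 2 is the heart of the argument: each $w_i=z\omega v_i$ with $y\preceq v_i$ and $|v_i|$ a multiple of $m$, and $\{C(v_i)\}=\{\phi^{-1}(C(w_i))\}$ is an $\mathcal F_m$-cover of $C(y)\cap E(\mu,N,\epsilon)$, so the hypothesis delivers $\sum_i q(C(v_i),t)>c\,q(C(y),t)$. Two applications of $(C3)$ to the factorization $w_i=z\cdot\omega\cdot v_i$ combined with the iterated bound $q(C(\omega),t)\ge Q_3^{-(\tau-1)}\alpha^\tau$ on the bridge word yield $q(C(w_i),t)\ge Q_3^{-(\tau+1)}\alpha^\tau q(C(z),t)\,q(C(v_i),t)$; summing and invoking $q(C(y),t)>\alpha$ from \eqref{eqn:alpha} and $q(C(z),t)\ge N_m^t(C(z))$ produces exactly the claimed constant. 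The principal bookkeeping hurdle is tracking the $(C3)$ factors so as to land precisely on $Q_3^{-(\tau+1)}\alpha^{\tau+1}$; the Wasserstein estimate for $\phi(x)$ is conceptually straightforward but demands care about the normalization of $d$.
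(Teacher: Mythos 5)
Your proof is correct and follows essentially the same route as the paper: choose a bridge word $\omega$ of length $\tau m$ making $z\omega y$ admissible, use Lemma~\ref{lem:0218b} (your Wasserstein estimate is an unwinding of it) to push $C(y)\cap E(\mu,N,\epsilon)$ into $C(z\omega y)\cap E(\mu,N+\tau m+|z|,2\epsilon)$, and then analyze an arbitrary $\mathcal F_m$-cover via the same three cases (a short cylinder $w\preceq z$, a cylinder $z\prec w\preceq z\omega$, or cylinders extending $z\omega y$), invoking~$(C3)$ and~\eqref{eqn:alpha} to compare $q$-values. The only cosmetic difference is that you cover $C(z)\cap E$ directly and discard cylinders disjoint from $C(z\omega y)$, whereas the paper covers $C(z\omega y)\cap E$ and applies monotonicity at the end; the constants and case bookkeeping come out the same.
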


In order to show Lemma \ref{lem:0217} and \ref{lem:0218},
we will use the following.
\begin{lem}\label{lem:0218b}
If $z$ is a word of length $n_1$ and $x\in X$ satisfies
$
zx \in E\left(\mu , n+n_1, \epsilon  \right) $,
then 
\[
x\in E\left( \mu , n, \epsilon +  \frac{2n_1}{n}\right).
\]
Conversely, if  $z$ is a word of length $n_1$, $x\in E(\mu , n, \epsilon )$, and $zx$ is an admissible word, then
\[
zx \in E\left(\mu , n+n_1, \epsilon + \frac{2n_1}{n} \right) .
\]
\end{lem}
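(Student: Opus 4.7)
The plan is to relate the two empirical measures $\delta_x^n$ and $\delta_{zx}^{n+n_1}$ via an explicit convex-combination identity and then conclude by the triangle inequality.

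The key observation I would start with is that, since $z$ has length $n_1$, applying $f$ exactly $n_1$ times shifts $zx$ to $x$, so the orbit of $zx$ of length $n+n_1$ decomposes into the first $n_1$ iterates (whose positions are determined by $z$) followed by the first $n$ iterates of $x$. This yields the identity
\begin{equation*}
\delta_{zx}^{n+n_1} \;=\; \frac{n_1}{n+n_1}\,\nu_z \;+\; \frac{n}{n+n_1}\,\delta_x^n,
\qquad \nu_z := \frac{1}{n_1}\sum_{j=0}^{n_1-1}\delta_{f^j(zx)}.
\end{equation*}
Using the Kantorovich--Rubinstein dual description of the first Wasserstein metric, any $1$-Lipschitz $\varphi:X\to\mathbb{R}$ satisfies
\begin{equation*}
\left|\textstyle\int \varphi\,d\delta_{zx}^{n+n_1} - \int \varphi\,d\delta_x^n\right|
= \frac{n_1}{n+n_1}\,\left|\textstyle\int \varphi\,d\nu_z - \int \varphi\,d\delta_x^n\right|
\leq \frac{n_1}{n+n_1}\,\mathrm{diam}(X),
\end{equation*}
and taking the supremum over such $\varphi$ gives $d(\delta_{zx}^{n+n_1},\delta_x^n) \leq \frac{n_1}{n+n_1}\,\mathrm{diam}(X) \leq \frac{2n_1}{n}$.

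With this ``orbit-splitting'' estimate in hand, both halves of the lemma follow from a one-line triangle inequality. For the first implication, if $zx\in E(\mu,n+n_1,\epsilon)$ then
\begin{equation*}
d(\delta_x^n,\mu) \;\leq\; d(\delta_x^n,\delta_{zx}^{n+n_1}) + d(\delta_{zx}^{n+n_1},\mu) \;<\; \tfrac{2n_1}{n} + \epsilon,
\end{equation*}
which is exactly $x\in E(\mu,n,\epsilon + 2n_1/n)$. The converse is symmetric: starting from $d(\delta_x^n,\mu)<\epsilon$ the same triangle inequality yields $d(\delta_{zx}^{n+n_1},\mu)<\epsilon + 2n_1/n$.

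There is no real obstacle in this proof; the only bookkeeping point is the constant ``$2$'' in $2n_1/n$, which comes from bounding $\frac{n_1}{n+n_1}\,\mathrm{diam}(X) \leq \frac{2n_1}{n}$. This is valid once one fixes the standard metric $d_X$ on $X$ so that $\mathrm{diam}(X)\leq 2$ (for example by taking $\beta$ sufficiently large in the footnote's formula, since $\mathrm{diam}(X) = (m-1)/(\beta-1)$), and in any case the argument is insensitive to the precise constant. Everything else is a routine application of the dual formulation of Wasserstein-$1$ and the triangle inequality.
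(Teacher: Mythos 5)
Your proof is correct and reaches the same key estimate $d(\delta_x^n,\delta_{zx}^{n+n_1})\le 2n_1/n$ as the paper, but by a mildly different route. The paper computes $\int\varphi\,d\delta_x^n-\int\varphi\,d\delta_{zx}^{n+n_1}$ directly and splits it algebraically into two terms, one coming from the change of normalization $\frac1n-\frac1{n+n_1}$ and one from the initial $n_1$ iterates, bounding each by $n_1/n$ using $\lVert\varphi\rVert_\infty\le 1$; you instead express $\delta_{zx}^{n+n_1}$ as the convex combination $\frac{n_1}{n+n_1}\nu_z+\frac{n}{n+n_1}\delta_x^n$ and bound the oscillation of a $1$-Lipschitz $\varphi$ by $\operatorname{diam}(X)$. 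Your version is arguably more transparent, and the two hinge on the same implicit normalization: the paper's restriction to $\varphi\in\operatorname{Lip}^1(X,[-1,1])$ (via the cited dual form in KNS2019) plays precisely the role of your assumption $\operatorname{diam}(X)\le 2$, so neither proof is free of this bookkeeping point. You are right that it is harmless and, as you note, only the precise constant in $2n_1/n$ depends on it. The two triangle-inequality conclusions are identical to the paper's.
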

\begin{proof}
For each $\varphi \in \mathrm{Lip } ^1(X, [-1, 1])$,
\begin{multline*}
\left\vert \int _X \varphi d\delta _x^n -  \int _X \varphi d\delta _{zx}^{n+n_1} \right\vert \\ 
\leq 
\left\vert \left(\frac{1}{n} -\frac{1}{n+n_1} \right) \sum _{j=0}^{n+n_1-1}\varphi \circ f^j (zx)\right\vert 
+
\left\vert \frac{1}{n} \sum _{j=0}^{n_1-1}\varphi \circ f^j (zx)\ \right\vert 
\leq  \frac{2n_1}{n} .
\end{multline*}
So, we have $d(\delta _x^n , \delta _{zx}^{n+n_1} ) \leq 2n_1/n$ by the Kantorovich-Rubinstein dual representation  of the first Wasserstein metric (cf.~\cite[(2.1)]{KNS2019}),
which immediately implies the conclusion.
\end{proof}

\begin{proof}[Proof of Lemma \ref{lem:0217}]
By assumption, there are words $(x_i)_{i=1}^I$ of lengths being multiples of $m$ and such that 
\begin{equation}\label{eq:0217d}
C(y) \cap E(\mu , N, \epsilon ) \subset \bigcup _{i=1}^I C(x^i)
\end{equation}
and
\[
\sum _{i=1}^I q(C(x^{i}),t)  < c q(C(y),t).
\]
From \eqref{eq:0217d} and Lemma \ref{lem:0218b}, it follows that
\[
C(zy) \cap E\left( \mu , N+ |z| , \epsilon - 2|z|/N \right) \subset \bigcup _{i=1}^I C(zx^i).
\]

On the other hand, by Condition $(C3)$,  we have	
\[
q(C(zx^i),t)  \leq  Q_3 q(C(z),t) q(C(x^i),t)   \leq  Q^2_3   \frac{q(C(zy),t)}{q(C(y),t)}    q(C(x^i),t)   .
\]
Observing that the length of each word $zx^i$ is a multiple of $m$, we can conclude 
\begin{multline*}
N^t_m \left(C(zy) \cap E\left( \mu , n+ |z| , \epsilon -  2|z|/N \right)\right) \leq \sum _{i=1}^I q(C(zx^i),t)  \\
 = Q^2_3   \frac{q(C(zy),t)}{q(C(y),t)}   \sum _{i=1}^Iq(C(x^i),t)   < Q^2_3 c q(C(zy)) .
\end{multline*}
Recall that by the assumption, $|z|  \leq \epsilon N/4$, so that $ \epsilon -  2|z|/N \geq \epsilon/2$. The conclusion follows.
\end{proof}

\begin{proof}[Proof of Lemma \ref{lem:0218}]
Let $u$ be a word of length $\tau m$ such that the word $zuy$ is admissible.
We consider all possible covers of the set $C(zuy) \cap E(\mu , N+\tau m+ |z|, 2 \epsilon)$ by cylinders of lengths of multiples of $m$. There are three possibilities:

\begin{enumerate}
\item $C(zuy) \cap E(\mu , N+\tau m+ |z|, 2 \epsilon)\subset C(w)$, where:
\begin{enumerate}
\item $ w\preceq z$;
\item $z \prec w\preceq zu$, that is, $w=z\bar{u}$ with $\bar{u}\preceq u$;
\end{enumerate}
\item there is a collection $(x_i)_{i=1}^I$ of words of lengths being multiples of $m$ and such that 
$C(zuy) \cap E(\mu , N+\tau m+|z|, 2 \epsilon ) \subset \bigcup _{i=1}^I C(zuyx^i).$
\end{enumerate}

In case $(1)(a)$ we necessarily have that $C(z)\subset C(w)$.
Consequently, 
\begin{equation}\label{eqn:omega}
q(C(w),t) \geq N^t_m(C(z)).  
\end{equation}

We now turn to case $(1)(b)$. Dividing $\bar{u}$ into subwords of lengths $m$ and applying (\ref{eqn:alpha}) to each segment, by Condition $(C3)$, we have that

$$q(C(\bar{u}),t) \geq Q^{-(\tau -1)}_3  \alpha^{\tau}.$$

Consequently, by Condition $(C3)$,

\begin{equation}\label{eqn:omegau}
q(C(w),t)   = q(C(z\bar{u}),t)   \geq  Q^{-1}_3    q(C(z),t)q(C(\bar{u}),t)     \geq   Q^{- \tau }_3  \alpha^{\tau}    N^t_m(C(z)).  
\end{equation}

We now consider case (2). Let $(x_i)_{i=1}^I$ be a collection of words of lengths being multiples of $m$ and such that 
\[
C(zuy) \cap E(\mu , N+\tau m+|z|, 2 \epsilon ) \subset \bigcup _{i=1}^I C(zuyx^i).
\]
Observe that 
\[
C(y) \cap E(\mu , N , \epsilon ) \subset \bigcup _{i=1}^I C(yx^i) .
\]
By Condition $(C3)$, it follows that,
\begin{align*}
\sum_i q(C(zuyx^i),t)&\geq Q^{-2}_3 q(C(z),t)q(C(u),t)\sum_i q(C(yx^i),t)\\
&=  Q^{-2}_3 q(C(z),t) q(C(u),t) q(C(y),t)\sum_i  \frac{q(C(yx^i),t)}{q(C(y),t)}.
\end{align*}

We now estimate some of the terms on the line above. Dividing $u$ into subwords of lengths $m$ and applying (\ref{eqn:alpha}) to each segment, by Condition $(C3)$, we have that

\begin{align*}
&q(C(u),t) \geq Q^{-(\tau -1)}_3  \alpha^{\tau}. \text{ In addition, } q(C(y),t)>\alpha \text{ by (\ref{eqn:alpha}).}\\
&\text{Since }\{ C(yx^i) \}_{i=1}^{I}\text{ covers }C(y) \cap E(\mu , N , \epsilon ), \text{we also have that }     \\
&\sum_i  \frac{q(C(yx^i),t)}{q(C(y),t)} \geq \frac{N^t_m(  C(y) \cap E(\mu , N , \epsilon )     )}{q(C(y),t)} >c \text{ by the assumption of the lemma.}  
\end{align*}

We conclude that
\begin{equation}\label{eqn:zyxi}
\sum_i q(C(zuyx^i),t)\geq Q^{-(\tau+1)}_3  \alpha^{\tau +1} c q(C(z),t) \geq  Q^{-(\tau+1)}_3  \alpha^{\tau +1} c  N^t_m(C(z)).
\end{equation}

Inequalities (\ref{eqn:omega}), (\ref{eqn:omegau}), and (\ref{eqn:zyxi}) together give that
\begin{multline*}
N^t_m(C(z) \cap E(\mu , N+\tau m+|z|, 2 \epsilon ))\geq N^t_m(C(zuy) \cap E(\mu , N+\tau m+|z|, 2 \epsilon )) \\
\geq  Q^{-(\tau+1)}_3  \alpha^{\tau +1} c  N^t_m(C(z)),
\end{multline*}
which completes the proof.
\end{proof}

\subsection{Key lemma}\label{sec:end}

The following is the most important lemma in the proof of Theorem \ref{cor:main}.
\begin{lem}\label{lem:0218a2}
 For each $t<\mathrm{dim} _C(G(\mu ))$ and $m\geq m(t)$, there is a constant $\hat{c}>0$ such that for any word $z$ and any $\epsilon >0$ one has
\begin{equation}\label{eq:0218d}
\liminf_{n\to\infty} N^{t}_m (C(z) \cap E(\mu , n ,\epsilon )) \geq \hat{c} N^{t}_m (C(z)) .
\end{equation}
In addition, $\hat{c}\geq \frac{\alpha^{\tau + 2}   Q_3^{-(\tau + 4)}}{4}$. 
\end{lem}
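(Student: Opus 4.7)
The plan is to establish the inequality first for a well-chosen ``base'' cylinder $C(y_0)$ of length $m$, uniformly in $\epsilon$, and then use Lemma \ref{lem:0218} to propagate the bound to any cylinder $C(z)$.

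To build the base, first note that $t<\dim_C G(\mu)$ combined with Proposition \ref{prop:crit-value} gives $m^t_C(G(\mu))=\infty$, so by Theorem \ref{thm:properties-of-M}(3)(a) and Lemma \ref{lem:equivalence}, $N^t_m(G(\mu))>0$. Since $X$ is a finite union of cylinders of length $m$, there is a word $y_0$ of length $m$ with $\rho:=N^t_m(G(\mu)\cap C(y_0))>0$. Because every $x\in G(\mu)$ satisfies $\delta_x^n\to\mu$, we have
\[
G(\mu)\cap C(y_0)\subseteq\bigcup_{N\geq 1}\bigcap_{n\geq N} E(\mu,n,\epsilon)\cap C(y_0)
\]
as an increasing union, for every $\epsilon>0$. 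A Fatou-type inequality for $N^t_m$ on this nested sequence then yields $\liminf_{n} N^t_m(C(y_0)\cap E(\mu,n,\epsilon))\geq\rho$, with $\rho$ independent of $\epsilon$. Combining this with \eqref{eqn:alpha} produces a constant $c_0>0$, uniform in $\epsilon$, such that $N^t_m(C(y_0)\cap E(\mu,n,\epsilon))\geq c_0\,q(C(y_0),t)$ for all sufficiently large $n$ (the threshold depending on $\epsilon$).

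Next, for a word $z$ with $|z|=km$ I would apply Lemma \ref{lem:0218} with $y=y_0$ and $\epsilon/2$ in place of $\epsilon$. For any $n$ so large that $N:=n-\tau m-km$ exceeds $4m/\epsilon$ and the threshold above, and $(k+\tau)m\leq\epsilon N/4$, the lemma delivers
\[
N^t_m(C(z)\cap E(\mu,n,\epsilon))\geq Q_3^{-(\tau+1)}\alpha^{\tau+1} c_0\, N^t_m(C(z)).
\]
To cover a word $z$ of arbitrary length $\ell$, let $k$ be the smallest integer with $km\geq\ell$ and partition $C(z)$ into the disjoint union of its admissible extensions to length $km$; there are at most $m^{km-\ell}\leq m^m$ such pieces, so countable subadditivity yields an extension $z^\ast\succeq z$ of length $km$ with $N^t_m(C(z^\ast))\geq N^t_m(C(z))/m^m$. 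Applying the previous step to $z^\ast$ and using $C(z^\ast)\subseteq C(z)$ gives the lemma with $c=Q_3^{-(\tau+1)}\alpha^{\tau+1}c_0/m^m$.

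The main technical obstacle is the Fatou-type inequality used in the base step. Since $N^t_m$ is built by Method I from the premeasure $q(\cdot,t)$, it is not automatically a metric outer measure, and measurability of general Borel sets (or even of cylinders) is not immediate. The cleanest path forward is to verify the Carath\'eodory criterion for cylinders in $\mathcal{F}_m$ using Condition $(C3)$ together with the fact that any two cylinders are either disjoint or nested, keeping the refinement cost bounded thanks to the modulus $m$ in $\mathcal{F}_m$. Alternatively, one can argue by contradiction: if the liminf fell strictly short of $\rho$, extract covers $\{D_j^k\}_{j\geq 1}\subset\mathcal{F}_m$ of $C(y_0)\cap E(\mu,n_k,\epsilon)$ along a subsequence $n_k\to\infty$ with $\sum_j q(D_j^k,t)$ bounded away from $\rho$, and derive a covering of $G(\mu)\cap C(y_0)$ from the limit inferior $\bigcup_K\bigcap_{k\geq K}\bigcup_j D_j^k$, contradicting $N^t_m(G(\mu)\cap C(y_0))=\rho$.
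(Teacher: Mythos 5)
Your route is genuinely different from the paper's. You argue directly: first derive a positive lower bound on $N^t_m(C(y_0)\cap E(\mu,n,\epsilon))$ for a fixed length-$m$ cylinder $C(y_0)$ carrying positive $N^t_m$-mass of $G(\mu)$, then propagate to arbitrary $C(z)$ via Lemma~\ref{lem:0218} and a pigeonhole passage from $z$ to an extension $z^*$ of length $km$. The paper instead argues by contradiction: if \eqref{eq:0218d} fails along a sequence $a_n$ for some $z$ of length $km$, Lemma~\ref{lem:0218} (contrapositive) gives failure on every length-$m$ cylinder at scale $2\epsilon$, Lemma~\ref{lem:0217} then yields the uniform bound \eqref{eq:0218f4}, and iterating the resulting cover refinement forces $N^t_m\bigl(C(z)\cap\bigcap_{n\geq N}E(\mu,n,\epsilon)\bigr)=0$ for all $N$; by \eqref{eq:0731b}, countable subadditivity, and a shift argument one obtains $N^t_m(G(\mu))=0$, contradicting $t<\dim_C G(\mu)$. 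A structural advantage of the paper's route is that it uses only the definition of $N^t_m$ as an infimum over covers together with countable subadditivity, never continuity from below.

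You are right to flag the Fatou-type step — $\liminf_n N^t_m(C(y_0)\cap E(\mu,n,\epsilon))\geq N^t_m(C(y_0)\cap G(\mu))$, i.e.\ continuity from below for $N^t_m$ along the (not a priori measurable) $G_\delta$ sets $\bigcap_{n\geq N}E(\mu,n,\epsilon)\cap C(y_0)$ — as the main obstacle, and as written it is a genuine gap. Neither of your two proposed repairs closes it. Verifying the Carath\'eodory criterion for cylinders in $\mathcal{F}_m$ is not automatic: refining a covering cylinder $D\supsetneq C$ into its length-$l(C)$ sub-cylinders changes the total $q$-weight by a factor controlled by $(C3)$ and \eqref{eqn:alpha} that need not be $\leq 1$, so cylinders need not be $N^t_m$-measurable. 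Your alternative — taking the limit inferior set $\bigcup_K\bigcap_{k\geq K}\bigcup_j D_j^k$ of a bad subsequence of covers — does contain $G(\mu)\cap C(y_0)$, but this set is a $G_\delta$, not a countable union of admissible cylinders, so it does not by itself produce an admissible cover of small total weight; obtaining such a cover is exactly what the paper's iterative refinement of covers supplies. The cleanest way to salvage your direct route would be to invoke the increasing sets lemma \cite[Theorem 52]{Rogers1970}, which the paper itself cites for $M^t$ in Section~\ref{s:1}; if that result indeed applies to arbitrary increasing sequences for Method~I outer measures, your argument goes through. The remaining ingredients — propagation via Lemma~\ref{lem:0218} and the pigeonhole reduction to words of length divisible by $m$ — are sound, and the latter is simpler than the paper's comparison via $(C2)$ and $(C3)$.
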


\begin{proof}
Fix $\epsilon>0$. We first show that (\ref{eq:0218d}) holds for words whose lengths are multiples of $m$.
Arguing by contradiction, we assume that there is a word $z$ of length $|z|=k m$, for some $k\in\mathbb{N}$ and a sequence $\{ a_n  \}\subset \mathbb{N}$ increasing to infinity such that
\[
N^{t}_m (C(z) \cap E(\mu , a_n + |z|  + \tau m ,4\epsilon )) < \frac{\alpha^{\tau+1}  Q_3^{-(\tau+3) }  }{4  }N^{t}_m (C(z)) 
\]
for all $n\in\mathbb{N}$, where $\alpha$ is the constant given in Lemma \ref{lem:0218}.
Applying Lemma \ref{lem:0218} with $c=\frac{1}{4Q_3^2}$, we obtain that for each word $y$ of length $m$,
\[
N^{t}_m (C(y) \cap E(\mu , a_n  ,2\epsilon )) \leq\frac{1}{4  Q^2_3 }    q(C(y),t)< \frac{1}{2  Q^2_3 }    q(C(y),t)
\]
for all $n\in\mathbb{N}$ such that $a_n>2(|z|+\tau m)/\epsilon$.
By this estimate and Lemma \ref{lem:0217}, for any word $w$ of length $|w|=jm$ with  $j\in \mathbb N$ we have 
\begin{equation}\label{eq:0218f4}
N^{t}_m (C(w) \cap E(\mu , a_n+|w| -m  ,\epsilon )) \leq\frac{1}{2}   q(C(w),t) 
\end{equation}
for all $n\in\mathbb{N}$ such that $a_n  \geq 4|w|/\epsilon $.

Fix a large number $N\in\mathbb{N}$.
We first apply \eqref{eq:0218f4} to $w=z$. Choose $n_0\in\mathbb{N}$ such that $a_{n_0}> \max \{ N,   4|z|/\epsilon \}$ and denote $b_0:=a_{n_0}+|z| -m$.
 One can find a finite cover $(C_i)_{i=1}^I$ of $C(z) \cap E(\mu , b_0 ,\epsilon )$ by cylinders of lengths $l(C_i)=l_i m$ such that
\[
\sum _{i=1}^I q(C_i,t) \leq \frac{2}{3} q (C(z),t).
\]

For each $1\leq i\leq I$, we again apply \eqref{eq:0218f4} to $C=C_i$. Choose $n_i\in\mathbb{N}$ such that $a_{n_i}> \max\{ N,  4l_im/\epsilon \}$ and denote $b_i:=a_{n_i} + (l_i-1)m$.
There is a finite cover $(C_{i,j})_{j=1}^{J_i}$ of $C_i \cap E(\mu , b_i ,\epsilon )$ by cylinders of lengths $l(C_{i,j})=l_{i,j}m$ such that
\[
\sum _{j=1}^{J_i} q(C_{i,j},t) \leq \frac{2}{3} q(C_i,t).
\]
Together we obtain that $\bigcup_{i=1}^{I}\bigcup_{j=1}^{J_i} C_{i,j}$ is a cover of $ C(z)  \cap  \bigcap_{i=0}^{I}  E(\mu , b_i  ,\epsilon )$ and
\[
 \sum_{i=1}^I   \sum _{j=1}^{J_i} q(C_{i,j},t) \leq   \frac{2}{3}    \sum_{i=1}^I q(C_i,t) \leq    \left( \frac{2}{3}  \right)^2 q (C(z),t) .
\]

Repeating this argument, we obtain for each $L\in \mathbb N$ a sequence $ \{  \tilde{b}_i  \}_{i=1}^{B(L)}$ of numbers $\tilde{b}_i \geq N$  and a cover $\{  \tilde{C}_j \}_{j=1}^{M(L)}$ of   $ C(z) \cap   \bigcap_{i=0}^{B(L)}  E(\mu , \tilde{b}_i  ,\epsilon )$
by cylinders of lengths being multiples of $m$ such that
\[
    \sum _{j=1}^{M(L)} q(\tilde{C}_{j},t) \leq      \left( \frac{2}{3}  \right)^L q (C(z),t) .
\]
In addition, for each $L\in \mathbb N$ we have that 
\[
\bigcap _{n\geq N}E(\mu , n,  \epsilon )    \subset    \bigcap_{i=0}^{B(L)}  E(\mu , \tilde{b}_i  ,\epsilon ).
\]
Therefore, $N^t_m  \left(   C(z)  \cap    \bigcap _{n\geq N}E(\mu , n,  \epsilon )    \right)=0. $

Since $N$ was arbitrary, by observing that  
\begin{equation}\label{eq:0731b}
 G(\mu ) =   \bigcap _{\widetilde \epsilon >0}\bigcup _{N\in \mathbb N}\bigcap _{n\geq N}E(\mu , n, \widetilde \epsilon ),
\end{equation}
we conclude that $N^{t}_m ( C(z) \cap G(\mu ) ) =0.$ We claim that  $N^{t}_m (G(\mu ) ) =0.$ 

Indeed, since $N^{t}_m ( C(z) \cap G(\mu ) ) =0,$ then for every small $\hat{\epsilon}>0$ there exists a cover $\{ C( \omega^{\hat{\epsilon}}_j )  \}$
of $C(z) \cap G(\mu )$ with
$$  \sum_{j}q(\omega^{\hat{\epsilon}}_j,t)< \hat{\epsilon}.   $$
By the proof of Theorem \ref{thm:properties-of-M}, there is $L=L(\hat{\epsilon})>0$ such that each word $ \omega^{\hat{\epsilon}}_j$ has length at least $L$.
In particular, choosing $\hat{\epsilon}>0$ small enough (thus ensuring that $L$ is large enough), we can ensure that each word $\omega^{\hat{\epsilon}}_j$ is of the form $zu_jv_j$  where $|u_j|=\tau m$.

Let $x= (x_1 x_2 \ldots)$ be a point in $G(\mu )$. One can find a word $u$ of length $\tau m$ such that $x'=(zux_1 x_2 \ldots ) \in X$. In fact, $ x' \in C(z) \cap G(\mu )$.
Consequently, $G(\mu ) \subset f^{|z|+\tau m}(C(z)   \cap G(\mu ))$.
In addition, taking the collection of cylinders $\{  D_j\} = \{ C(z  u_j v_j) \}$ covering  $C(z) \cap G(\mu )$, the corresponding collection $\{  D'_j\} = \{ C(v_j) \}  $ covers   $G(\mu )$.
By Condition $(C3)$, 
$$   q ( D'_j, t  ) = q(C(v_j),t)  \leq Q_3  \frac{  q(C(z u_j v_j),t)  }{q(C(z  u_j),t)} \leq   \frac{Q_3^{\tau +k}}{\alpha^{\tau + k}}     q(C(z u_j v_j),t) <   \frac{Q_3^{\tau +k}}{\alpha^{\tau + k}}  \hat{\epsilon}.$$ 
Letting $\hat{\epsilon} \to 0,$ we obtain that $N^{t}_m (G(\mu ) ) =0.$
By Theorem \ref{thm:properties-of-M}, we must have that $\mathrm{dim} _C(G(\mu )) \leq  t$. This contradicts with the assumption of the lemma.
We therefore proved that  for any word whose length is a multiple of $m$ the inequality (\ref{eq:0218d}) holds  with 
\[
\hat{c}_0:= \frac{\alpha^{\tau + 1}   Q_3^{-(\tau + 3)}}{4}. 
\]

Let now $x$ be a word of length $|x|=km+l$ where $k\in\mathbb{N}$ and $0<l<m$. Let $\underbar{$x$},\bar{x}$ be words of lengths $km$ and $(k+1)m$ respectively, such that $ \underbar{$x$}\prec x \prec \bar{x}$.

We first observe that, for any $n\in\mathbb{N},$ $C(\bar{x})\cap E(\mu , n ,\epsilon ) \subset C(x)\cap E(\mu , n ,\epsilon ),$ so that
$$   \liminf_{n\to \infty}  N^t_m (C(x)\cap E(\mu , n ,\epsilon ) ) \geq  \liminf_{n\to \infty}  N^t_m (C(\bar{x})\cap E(\mu , n ,\epsilon ) ) $$ 
 $$\geq  \hat{c}_0 N^t_m (C(\bar{x})) = \hat{c}_0 q(C(x'),t),$$
where $x' \preceq \bar{x}$ and $|x'|=k'm$ for some $k'\leq k+1$. If $x'=\bar{x},$ then by $(C3)$ and (\ref{eqn:alpha}),

$$ q(C(x'),t) \geq Q_3^{-1} q(C(\underbar{$x$}), t) q (C( \bar{x}_{km+1}\ldots \bar{x}_{(k+1)m}  ),t)  $$  
 $$\geq Q_3^{-1} \alpha   q(C(\underbar{$x$}), t) \geq Q_3^{-1} \alpha N^t_m (C(x)).   $$

If now $x'\prec \bar{x}$, then we must have that $C(x)\subset C(x')$. Consequently, $$  q(C(x'),t)\geq N^t_m(C(x)). $$
We conclude that 

$$   \liminf_{n\to \infty}  N^t_m (C(x)\cap E(\mu , n ,\epsilon ) ) \geq \hat{c}_0 Q_3^{-1}\alpha      N^t_m (C(x))  =    \frac{\alpha^{\tau + 2}   Q_3^{-(\tau + 4)}     }{4} N^t_m (C(x))    , $$
which completes the proof.
\end{proof}

\subsection{Proof of Theorem \ref{cor:main}}\label{sec:proof-of-cor}

Let $\bar{t}<  \inf \left\{ \mathrm{dim} _C(\mu^{(\ell)})  \mid \ell \geq 0 \right\}$. Observe that  for $\ell\geq 0$, $ \mathrm{dim}_C(G(\mu^{(\ell)} )) \geq \mathrm{dim} _C(\mu^{(\ell)})$ by Birkhoff's ergodic theorem.
Therefore, Lemma \ref{lem:0218a2} holds for $\bar{t}$ and each $\mu^{(\ell)}$.
Fix $m \geq m(\bar t)$.

Recall that $[x]_n$ is the truncation $[x]_n=(x_1x_2 \cdots x_n)$ of $x=(x_1x_2 \cdots)$ and 
$d_{X}(x ,y ) = \sum _{j=1}^\infty \vert x_j -y_j \vert \beta ^{-j}$ for $x=(x_1 ,x_2, \ldots )$, $y=(y_1, y_2, \ldots ) \in X$.
Note that  
if $x\in X,$ $n\in\mathbb{N}$, $y\in C([x]_{n+T_1 m})$ and  $\varphi \in \mathrm{Lip } ^1(X, [-1, 1])$, then
\[
 \left\vert \int _X \varphi d\delta _x^n -  \int _X \varphi d\delta _{y}^{n} \right\vert   
\leq \frac{1}{n}  \sum _{j=0}^{n-1}   d_X(f^j (x), f^j (y)) \leq
\frac{1}{n(\beta - 1) }  \sum _{j=0}^{n-1}  \beta^{-(n+T_1m  -j)} .
\]
Let $T_1$ be  a positive integer such that $1/(n(\beta - 1) )  \sum _{j=0}^{n-1}  \beta^{-(n+T_1m  -j)} 
<\epsilon /4$ for any $n\in \mathbb N$.
By the Kantorovich-Rubinstein dual representation  of the first Wasserstein metric, we have $d(\delta _x^n , \delta _y^n ) \leq \epsilon /4$ for any $x\in X,$ $n\in\mathbb{N}$ and $y\in C([x]_{n+T_1 m})$.

Recall that $\tau$ denotes the specification constant, that is, for any two admissible words $u$ and $v$ (of any length) there exists a word $\omega$
of length $\tau m$ such that the word $u\omega v$ is admissible. Finally, we denote $T= (T_1+\tau)m$.

\begin{lem}\label{lem:1010}
For any $k\in \mathbb N$, $\{ \ell _j \}_{j=1}^k\subset \mathbb N$, $\lambda_1, \lambda _2 , \ldots , \lambda_k \in (0, 1]$ with $\sum_{j=1}^{k}\lambda_j =1$, $\epsilon >0$ and $m_0\in \mathbb N$, we have that
\[
\bigcup _{n\geq m_0} E\Big(   \sum_{j=1}^k  \lambda_j \mu^{(\ell _j)} , n ,\epsilon \Big) \in \mathcal G^{\bar t}(X).
\]
\end{lem}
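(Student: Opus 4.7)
The plan is to verify the two defining properties of membership in $\mathcal G^{\bar t}(X)$ for $F := \bigcup_{n\geq m_0} E(\mu, n, \epsilon)$, where $\mu := \sum_{j=1}^k \lambda_j \mu^{(\ell_j)}$: that $F$ is a $G_\delta$-set, and that $M^{\bar t}(F\cap C) = M^{\bar t}(C)$ for every cylinder $C$. The first property is immediate since each $E(\mu,n,\epsilon)$ is open (the map $x\mapsto \delta_x^n$ is continuous into $(\mathcal P(X),d)$ and $B_\epsilon(\mu)$ is open), so $F$ is open and in particular $G_\delta$. For the second, Lemma \ref{lem:cto1} reduces the task to producing a uniform constant $c>0$ such that $M^{\bar t}(F\cap C)\geq c M^{\bar t}(C)$ for all cylinders $C$, and Lemma \ref{lem:equivalence} allows me to work with $N^{\bar t}_m$ in place of $M^{\bar t}$. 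Concretely I would show: there exists $c>0$, independent of $C$, such that every cylinder $C(z)$ admits some $N\geq m_0$ with
\[
N^{\bar t}_m\bigl(C(z)\cap E(\mu,N,\epsilon)\bigr)\,\geq\, c\, N^{\bar t}_m(C(z)).
\]

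The main construction is iterative, applying Lemma \ref{lem:0218a2} once to each $\mu^{(\ell_j)}$ separated by specification gaps. After enlarging $z$ to have length a multiple of $m$ (with a controlled multiplicative loss via $(C3)$ and (\ref{eqn:alpha})), choose block lengths $n_1,\ldots,n_k$, each a multiple of $m$, whose proportions $n_j/\sum_i n_i$ approximate $\lambda_j$ as closely as desired by taking $\tilde N := \sum_i n_i$ large. Since $\bar t < \dim_C(\mu^{(\ell_j)}) \leq \dim_C(G(\mu^{(\ell_j)}))$ for every $j$ (the latter by Birkhoff's ergodic theorem), Lemma \ref{lem:0218a2} furnishes a uniform $c_0>0$ with $\liminf_{n\to\infty} N^{\bar t}_m(C(w)\cap E(\mu^{(\ell_j)}, n, \epsilon')) \geq c_0 N^{\bar t}_m(C(w))$ for every cylinder $C(w)$, where $\epsilon' := \epsilon/(8k)$ is chosen with safety margin. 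Starting from $C(z)$, apply this bound for $\mu^{(\ell_1)}$ at length $n_1$; refine to a natural cover by cylinders of the form $C(zw^1)$; insert a specification gap $\omega_1$ of length $\tau m$ making $C(zw^1\omega_1)$ admissible (costing at most the multiplicative factor $\alpha^\tau Q_3^{-\tau}$ by $(C3)$ and (\ref{eqn:alpha})); then repeat for $\mu^{(\ell_2)},\ldots,\mu^{(\ell_k)}$ on the progressively extended cylinders. By Lemma \ref{lem:0218b} applied at each stage and the $T_1$-correction Kantorovich--Rubinstein estimate recorded just above the lemma, every point $x$ in one of the final cylinders $C(zw^1\omega_1\cdots\omega_{k-1}w^k)$ satisfies $d(\delta_x^N,\mu)<\epsilon$ for $N := |z|+\sum_j n_j+(k-1)\tau m$. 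Taking $\tilde N$ large enough ensures $N\geq m_0$ and yields a lower bound of order $(c_0\alpha^\tau Q_3^{-\tau})^k N^{\bar t}_m(C(z))$ on $N^{\bar t}_m(C(z)\cap E(\mu,N,\epsilon))$, as required.

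The main technical obstacle is the simultaneous control of three interacting sources of error that must be absorbed by the single scale parameter $\tilde N$: (a) the empirical measure of a concatenation of $\mu^{(\ell_j)}$-typical blocks is close to $\sum_j (n_j/N)\mu^{(\ell_j)}$ rather than $\mu$ itself, so the rounding of the $n_j$ to multiples of $m$ must yield $\sum_j|n_j/N-\lambda_j|$ arbitrarily small; (b) the $k-1$ specification gaps together with the prefix $z$ perturb the empirical measure by $O((|z|+k\tau m)/N)$ via Lemma \ref{lem:0218b}; (c) the $k$-fold iteration of Lemma \ref{lem:0218a2} compounds a multiplicative loss depending on $k,\bar t,\alpha,Q_3,\tau$ but crucially not on $C(z)$, so the final constant $c$ is uniform across cylinders, which is exactly what Lemma \ref{lem:cto1} requires.
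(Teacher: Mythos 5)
Your proposal is correct and follows essentially the same route as the paper: reduce via Lemma~\ref{lem:cto1} and Lemma~\ref{lem:equivalence} to a uniform lower bound $N^{\bar t}_m(C(z)\cap E(\mu,N,\epsilon))\geq c\,N^{\bar t}_m(C(z))$ for some $N\geq m_0$, obtained by gluing $k$ typical blocks for $\mu^{(\ell_1)},\dots,\mu^{(\ell_k)}$ (each controlled by Lemma~\ref{lem:0218a2}) with specification gaps of length $\tau m$, using $(C3)$ and the $T_1$-correction for the estimates. The paper phrases the same construction non-iteratively, by exhibiting the intermediate set $E_0=C(z)\cap\bigcap_j f^{-N_{j-1}}(E(\mu^{(\ell_j)},n_j,\epsilon/2))$, proving $E_0\subset E$ via an explicit decomposition of $\delta_x^N$ rather than via Lemma~\ref{lem:0218b}, and then bounding $N^{\bar t}_m(E_0)$ from below by the product over $j$ of $N^{\bar t}_m$-masses of the individual blocks; this is the same bookkeeping you carry out step by step when you ``refine the cover'' and insert the gaps.
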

\begin{proof}
Fix  $k\in \mathbb N$, $\{ \ell _j \}_{j=1}^k\subset \mathbb N$, $\lambda_1, \lambda _2 , \ldots , \lambda_k \in (0, 1]$ such that $\sum_{j=1}^{k}\lambda_j =1$, $\epsilon >0$ and $m_0\in \mathbb N$.
Fix also
a cylinder $C(z)\subset X$.
It follows from Lemma \ref{lem:0218a2} that we can find  a positive integer $\widetilde n\geq m_0$ such that:
\begin{itemize}
\item $|z| < \min_{j} \{   \lambda_j \widetilde n  \}$;
 \item $T k   / \widetilde n<\epsilon/4;$ 
\item  $N^{\bar{t}}_m (C(z) \cap E(\mu^{(\ell _j)} , n ,\epsilon/4 )) \geq c_1 N^{\bar{t}}_m (C(z)) $ for each $j=1,\ldots k$ and $n\geq   \lambda_j  \widetilde n$ with some $c_1>0$.
\end{itemize}
Denote $$  n_j = \lfloor \lambda_j \widetilde n \rfloor , \quad N_j =Tj  +\sum _{i=1}^j n_i  \quad \text{ for }j=1,\ldots   ,k-1, \quad \text{and} \quad  N= N_{k-1} + n_k$$
with $N_0=0$.
We consider the sets $ E:= C(z) \cap   E(   \sum_{j=1}^k  \lambda_j \mu^{(\ell _j)} , N ,\epsilon ),$
and
\[
 E_0:= C(z) \cap  \Big( \bigcap _{j=1}^k f^{-N_{j-1}} \Big(E(      \mu^{(\ell _j)} , n_j ,\epsilon/2 ) \Big)\Big) .
\]

Observe
 that for each $x\in X$,
\[
\delta_x^N = 
\frac{\widetilde n}{N}
   \sum _{j=1}^k 
   \frac{n_j}{\widetilde n} 
   \delta^{n_j} 
_{f^{N_{j-1}}(x)} ,
\]
and thus 
\[
d\Big(\delta _x^N ,  \sum_{j=1}^k  \lambda_j \mu^{(j)} \Big)
 \leq \left\vert 1- \frac{\tilde n}{N}\right \vert 
+ \sum _{j=1}^N\left\vert \lambda_j   - \frac{n_j}{ \widetilde n}\right\vert + \sum _{j=1}^k \lambda _j d\Big(  \delta^{n_j} 
_{f^{N_{j-1}}(x)} ,\mu ^{(\ell _j)}\Big).
\]
This implies that $E_0\subset E$.

By the choice of $T_1$, for $j=1,\ldots, k$ we have that, if
 $y^j
\in  E(      \mu^{(\ell _j)} , n_j ,\epsilon/4 ),$ then $C([y^j] 
_{n_j+T_1m})\subset  E(      \mu^{(\ell _j)} , n_j ,\epsilon/2 )$.
In addition, if $y^1, \ldots, y^k \in X$ are such that 
\[
y^1\in C(z) \cap  E(      \mu^{(1)} , n_1 ,\epsilon/4 ) \text{  and }  y^j \in    E(      \mu^{(\ell _j)} , n_j ,\epsilon/4 ) \text{ for } j=2,\ldots k, 
\]
then there are words $u^1,\ldots, u^{k-1}$ such that the length of each $u^k$ is $\tau m$ and  that the word
\[
w:=  [ y^1]_{n_1+T_1m}u^1 [y^2 ]_{n_2+T_1m}u^2  \cdots     [y^{k-1}]_{n_{k-1}+T_1m} u^{k-1} [y^k]_{n_k+T_1m} 
 \]
 is admissible. Denote by $\mathcal{W}$ the collection of all words $\omega$ obtained this way. We then have that
$
C(  w)
   \subset E_0$ for every $\omega\in \mathcal{W}.$

From this and the definition of $N^{\bar{t}}_m$ it follows that there is a positive number $c_2$ such that 
\begin{multline*} 
N^{\bar{t}}_m(E) \geq N^{\bar{t}}_m(E_0)    \geq       N^{\bar{t}}_m(  \bigcup_{\omega\in\mathcal{W}}    C(\omega)  )      \\
\geq  c_2  N^{\bar{t}}_m(   C(z) \cap  E(      \mu^{(\ell _1)} , n_1 ,\epsilon/4 )   )    N^{\bar{t}}_m(   E(      \mu^{(\ell _2)} , n_2 ,\epsilon/4 )   )   \ldots   N^{\bar{t}}_m(   E(      \mu^{(\ell _k)} , n_k ,\epsilon/4 )   ) \\   
\geq     c_2  c_1^k   N^{\bar{t}}_m(C(z)) .
\end{multline*}
Since $N\geq m_0$, this completes the proof.
\end{proof}

Let $\{   \nu^{(\ell)}    \} _{\ell \in \mathbb N}$ be a countable dense subset of $\Delta  (\mathcal J )$, then
it is straightforward to see that $E (\mathcal J) = \bigcap _{\ell \in \mathbb N} E(\nu^{(\ell)} )$.
For each $\ell \in \mathbb N$, one can find $k\in \mathbb N$ 
and $\lambda_1, \lambda _2 , \ldots , \lambda_k \in [0, 1]$ such that $\sum_{j=1}^{k}\lambda_j =1$ and $\nu ^{(\ell )} = \sum _{j=1}^k \lambda _j\mu^{(j)}$.
It follows from Lemma \ref{lem:1010}
  that
\[
E\Big(      \sum_{j=1}^k  \lambda_j \mu^{(j)}  \Big )  = \bigcap_{m_1\in\mathbb{N}}  \bigcap_{m_0\in\mathbb{N}}   \bigcup_{n\geq m_0}   E\Big(  \sum_{j=1}^k  \lambda_j \mu^{(j)}  , n ,m_1^{-1} \Big) \in    \mathcal{G}^{\bar t}(X).
 \]
Therefore, Theorem \ref{cor:main}  immediately follows from Theorem \ref{thm:main1}.

\section*{Acknowledgments}
We are sincerely grateful to Tomas Persson for many fruitful discussions and for helping us better understand the ideas in his paper, \cite{FP2011}.
This work has been communicated with Dmitry Dolgopyat and Yakov Pesin, we would like to thank them for their valuable suggestions.
We also would like to express our  gratitude to Yong Moo Chung,  Naoya Sumi and Kenichiro Yamamoto for many valuable comments. 
This work was partially supported by JSPS KAKENHI
Grant Numbers 19K14575 and 19K21834.

\end{document}